\newtheorem{thm}{Theorem}[section]
\newtheorem{corollary}[thm]{Corollary}
\newtheorem{lemma}[thm]{Lemma}
\newtheorem{prob}[thm]{Problem}
\newcommand{\tp}{\text{\upshape top}}
\begin{document}
\title{Connectivity and $W_v$-Paths in Polyhedral Maps on Surfaces}
\author{Michael D. Plummer\thanks{
Department of Mathematics,
Vanderbilt University,
Nashville, TN 37215, USA,
Email: \tt{michael.d.plummer@vanderbilt.edu}}\,,
Dong Ye\thanks{Corresponding author. Department of Mathematical Sciences,
Middle Tennessee State University,
Murfreesboro, TN 37132, USA,
Email: {\tt{dong.ye@mtsu.edu}}.
The author was partially supported by a grant from the Simons Foundation (No. 359516).}\;  and
Xiaoya Zha\thanks{Department of Mathematical Sciences,
Middle Tennessee State University,
Murfreesboro, TN 37132, USA, Email: {\tt{xiaoya.zha@mtsu.edu}}.
Research supported by NSA Grant H98230-1-02192.}}

\date{\it This paper is dedicated to the memory of Victor Klee.}
\maketitle

\begin{abstract}
The $W_v$-Path Conjecture due to Klee and Wolfe states that any two vertices of a simple polytope
can be joined by a path
that does not revisit any facet.
This is equivalent to the well-known Hirsch
Conjecture. Klee proved that the $W_v$-Path Conjecture is true
for all 3-polytopes (3-connected plane graphs), and  conjectured even more, namely that the $W_v$-Path Conjecture is true for all general cell complexes. This
general $W_v$-Path Conjecture was verified for polyhedral maps on the projective
plane and
the torus by Barnette, and on the Klein bottle by Pulapaka and Vince. 
Let $G$ be a graph polyhedrally embedded in a surface $\Sigma$, and $x, y$ be two vertices of $G$. In this paper, we show that if there are three internally disjoint $(x,y)$-paths which are homotopic to each other, then there
exists a $W_v$-path joining $x$ and $y$. For every surface $\Sigma$, define a function $f(\Sigma)$ such that if
for every graph polyhedrally embedded in $\Sigma$ and for a pair of
vertices $x$ and $y$ in $V(G)$, the local connectivity $\kappa_G(x,y) \ge f(\Sigma)$, then there exists a
$W_v$-path joining $x$ and $y$. We show that $f(\Sigma)=3$ if $\Sigma$ is the sphere, and for all other 
surfaces
$3-\tau(\Sigma)\le f(\Sigma)\le 9-4\chi(\Sigma)$, where $\chi(\Sigma)$ is the Euler characteristic of 
$\Sigma$, and
$\tau(\Sigma)=\chi(\Sigma)$ if $\chi(\Sigma)< -1$ and 0 otherwise.
Further, if $x$ and $y$ are not cofacial, we prove that $G$ has at least $\kappa_G(x,y)+4\chi(\Sigma)-8$ internally disjoint $W_v$-paths joining
$x$ and $y$. This bound is sharp for the sphere. Our results indicate that the $W_v$-path problem is related to
both the local connectivity $\kappa_G(x,y)$, and the number of different homotopy
classes of internally disjoint $(x,y)$-paths as well as the number of internally disjoint $(x,y)$-paths in each homotopy class.\medskip

\noindent {\em Keywords:} $W_v$-path Conjecture, polyhedral embedding,  homotopy class, local connectivity \medskip

\noindent{\em Mathematics Subject Classification (2000):}  05C10, 57M15

\end{abstract}

%%%%%%%%%%%%%%%%%%%%%
\section{  Introduction}
%%%%%%%%%%%%%%%%%%%%%

The {\it $W_v$-Path Conjecture} (or {\it Non-revisiting Path Conjecture}), originally due to Klee and Wolfe (cf. \cite{K1}),
states that
any two vertices of a simple polytope $P$ can be joined by a path that does not revisit any facet of $P$.
(Such a non-revisiting path is also called a {\it $W_v$-path}.)
Klee further conjectured that the $W_v$-Path Conjecture is
true for general cell complexes \cite{K3}.
Larman \cite{L} showed that this general $W_v$-Path Conjecture is false for a very general type of 2-dimensional complex
and later Mani and Walkup \cite{MW} found a 3-sphere counterexample. 
The original $W_v$-Path Conjecture for boundary complexes of polytopes
is known to be equivalent to two other well-known conjectures, the {\it Hirsch Conjecture} and the
{\it Danzig $d$-step Conjecture}, involving higher dimensional polytopes which in turn are important
in the continuing search for a practical polynomial algorithm for the simplex method of linear programming.
For proofs of these equivalences we direct the reader to \cite{K1,K2} and \cite{KK}. The Dantzig $d$-step Conjecture was verified by
Klee and Walkup for all bounded polyhedra for $d\le 5$ \cite{KW}. In 2012, the Hirsch Conjecture was shown to be false
by Santos (cf. \cite{Sant}.)

The first positive result related to the general $W_v$-Path Conjecture was obtained by Klee \cite{K1} who showed
that every pair of vertices of a 3-connected plane graph $G$ (or ``3-polytope") are joined by a
$W_v$-path.  (See also \cite{Go} ad Gr\"unbaum \cite{Gr}.)
One of the nice properties of 3-connected plane graphs is that their faces meet ``properly''.
(Here and throughout the rest of the paper we consider a face to include its boundary.)
That is, they meet at a single vertex, a single edge or not at all.
This idea has been generalized to surfaces other than the plane by the notion of a {\it polyhedral embedding}.
An embedding of a graph $G$ in a surface $\Sigma$ is {\it polyhedral} if every face is a closed disk and any two faces of the embedding meet properly,
which is equivalently to saying the representativity (face-width)
of the embedding is at least 3 (cf. \cite{MT}).
It follows that a graph admitting a polyhedral embedding must be 3-connected (cf. \cite{MT}).

The general $W_v$-Path Conjecture has also been studied for polyhedral embeddings of graphs in general 2-dimensional surfaces as well.
(Here by ``2-dimensional surface'' we mean a connected compact 2-manifold without boundary.)
The $W_v$-Path Conjecture in this context states that for every surface (orientable or non-orientable) and every graph polyhedrally embedded therein, there is a $W_v$-path joining every pair of distinct vertices.
Klee's result on 3-connected plane graphs was later extended
to graphs polyhedrally embedded
in the projective plane \cite{B2} and torus \cite{B3} by Barnette,
and in the Klein bottle by Pulapaka and Vince \cite{PV2}.
It is now known, however, that the $W_v$-Path Conjecture is false for every orientable surface of genus $g\ge 2$ and for every non-orientable surface of genus $\overline{g}\ge 4$ (cf. \cite{PV1}).
Hence the sole unsettled case is the non-orientable surface with $\overline{g}=3$.
For a summary of these results, see \cite{B4, P, PV1, PV2}. 
As positive results for the $W_v$-Path Conjecture are rare, the departure point in
the present paper is an attempt to ascertain what conditions
suffice to make the $W_v$-Path Conjecture hold.

Let $G$ be a polyhedrally embedded graph in a surface $\Sigma$. Given two distinct vertices $x$ and $y$ in
a graph
$G$, they are {\it cofacial} if they belong to the boundary of a common face.
If the cofacial vertices
$x$ and $y$ are adjacent, then
there is exactly one $W_v$-path joining them (the single edge $xy$). If they are not adjacent,
there are exactly two $W_v$-paths joining them,
namely the two paths forming the boundary of the face.
In 
this paper, we will focus on the case in which 
$x$ and $y$ are {\it non}-cofacial.
The {\it local connectivity $\kappa_G (x,y)$} of two vertices $x$ and $y$ is defined to be the maximum number
of internally disjoint paths joining $x$ and $y$, where two paths joining $x$ and $y$ are {\em internally disjoint}
if they have only $x$ and $y$ in common. A graph $G$ is {\it $k$-connected} if 
$\kappa_G(x,y)\ge k$ for any two vertices $x$ and $y$. 
We observe that the $W_v$-path problem is closely related to both the local connectivity $\kappa_G(x,y)$, and
the number of homotopy classes of $(x,y)$-paths as well as the number of $(x,y)$-paths in each homotopy class.
In order to describe our results, define $f(\Sigma)$ to be the
smallest value such that for every graph $G$ polyhedrally embedded in the surface $\Sigma$  
and for a  pair of vertices $x$ and $y$ of $G$, if $\kappa_G(x,y)\ge f(\Sigma)$,
then there exists a non-revisiting $(x,y)$-path. The following is one of our main results, in which $\chi (\Sigma)$ denotes the Euler characteristic of the surface $\Sigma$.

\begin{thm} \label{thm:main-general}
Let $\Sigma$ be a closed surface. Then $f(\Sigma)=3$ if $\Sigma$ is the sphere. For all other surfaces $3-\tau(\Sigma)\le f(\Sigma)\le 9-4\chi(\Sigma)$, where $\tau(\Sigma)=\chi(\Sigma)$ if $\chi(\Sigma)<-1$ and $0$ otherwise.
\end{thm}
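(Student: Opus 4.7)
The plan is to dispose of the sphere case using Klee's theorem and then to establish the upper and lower bounds for the remaining surfaces by separate arguments.

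\emph{Sphere.} When $\Sigma$ is the sphere, every polyhedral embedding is a $3$-connected plane graph, so Klee's theorem supplies a $W_v$-path between every pair of vertices. Hence $f(\Sigma)\le 3$; the bound cannot be lowered within the class, as $3$-connectivity is forced by the polyhedral hypothesis, and so $f(\Sigma)=3$.

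\emph{Upper bound $f(\Sigma)\le 9-4\chi(\Sigma)$ for $\Sigma$ not the sphere.} Fix a pair $x,y$. If $x,y$ are cofacial, a $W_v$-path is automatic from the observations preceding the theorem, so I may assume they are non-cofacial. The key ingredient is the main structural tool announced in the abstract: three pairwise internally disjoint, pairwise homotopic $(x,y)$-paths always produce a $W_v$-path. The second ingredient is a topological bound: any family of pairwise internally disjoint and pairwise non-homotopic $(x,y)$-paths has size at most $4-2\chi(\Sigma)$. To prove the latter, I would view the union of $c\ge 2$ such paths as a topological graph $H$ in $\Sigma$ with two vertices and $c$ arcs; every disk face of $H$ has boundary walk of even length at least $4$, since length $2$ would force two of the paths to bound a disk and hence be homotopic. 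Letting $d$ be the number of disk faces, the identity $\sum_i\chi(F_i)=\chi(\Sigma)-2+c$ together with the inequality $\chi(F_i)\le 1$ gives $d\ge c-2+\chi(\Sigma)$, while double-counting arc--face incidences gives $4d\le 2c$; eliminating $d$ yields $c\le 4-2\chi(\Sigma)$. A pigeonhole argument then closes the upper bound: among $\kappa_G(x,y)\ge 9-4\chi(\Sigma)=2(4-2\chi(\Sigma))+1$ internally disjoint $(x,y)$-paths some homotopy class must contain at least three, and the main tool is applied to these three paths.

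\emph{Lower bound $f(\Sigma)\ge 3-\tau(\Sigma)$.} For surfaces with $\chi(\Sigma)\ge -1$ one has $\tau(\Sigma)=0$, and $f(\Sigma)\ge 3$ is immediate from the $3$-connectivity built into every polyhedral embedding. For $\chi(\Sigma)<-1$ the task is to exhibit, for each such surface, a polyhedral embedding of some graph $G$ with a non-cofacial pair $x,y$ satisfying $\kappa_G(x,y)\ge 2-\chi(\Sigma)$ but admitting no $W_v$-path joining $x$ and $y$. The natural starting point is the Pulapaka--Vince family of counterexamples to the $W_v$-Path Conjecture on high-genus surfaces; I would thicken the neighborhoods of $x$ and $y$ by routing additional internally disjoint $(x,y)$-paths through newly introduced handles or crosscaps so that the local connectivity is driven up to $2-\chi(\Sigma)$ while the polyhedral structure and the absence of a non-revisiting path are both preserved.

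\emph{Main obstacle.} The most delicate step is the lower-bound construction for $\chi(\Sigma)<-1$: one must realize the prescribed local connectivity precisely while keeping the embedding polyhedral and blocking every candidate $W_v$-path between $x$ and $y$. The upper-bound argument is largely combinatorial once the Euler-characteristic bound on homotopy classes is in hand, and the only subtlety there---that the auxiliary graph $H$ need not be cellularly embedded---is absorbed by the inequality $\chi(F_i)\le 1$ applied to each component of $\Sigma\setminus H$.
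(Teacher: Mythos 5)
Your sphere case and your upper bound are sound and essentially reproduce the paper's argument: the bound $4-2\chi(\Sigma)$ on the number of pairwise non-homotopic internally disjoint $(x,y)$-paths (the paper's Lemma~3.1, proved there by cutting and capping non-disk bigon faces rather than by your inequality $\chi(F_i)\le 1$, but the two computations are equivalent), followed by pigeonhole and the three-homotopic-paths theorem. The paper routes this through the quantitative statement that there are at least $\kappa_G(x,y)+4\chi(\Sigma)-8$ internally disjoint $W_v$-paths, but your direct pigeonhole at the threshold $9-4\chi(\Sigma)=2(4-2\chi(\Sigma))+1$ gives the same conclusion.

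The lower bound, however, is a genuine gap, and it is the part of the theorem that actually requires work. You defer the entire construction to ``thickening'' the Pulapaka--Vince counterexamples by routing extra $(x,y)$-paths through newly introduced handles or crosscaps, but this plan has an unresolved circularity: each added handle or crosscap lowers $\chi(\Sigma)$, so the target $\kappa_G(x,y)\ge 2-\chi(\Sigma)$ grows as you add topology, and you give no accounting of how many new internally disjoint paths one handle buys against how much it raises the target. Worse, every newly routed path must itself fail to be non-revisiting, and nothing in your plan forces this; a single new path that happens to avoid revisiting any face would destroy the example. The paper instead builds the witnesses from scratch: two $2n$-cycles $C_x$ and $C_y$ (with $n=2g$, resp.\ $n=\overline g$) around $x$ and $y$, joined according to a prescribed rotation system, with the vertical edges contracted and diagonal edges added to restore polyhedrality; the design forces every face incident with $x$ or $y$ to be so small that any $(x,y)$-path must revisit one of them, while $\kappa_G(x,y)$ equals $2-\chi(\Sigma)$ exactly. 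Until you exhibit such a family for every orientable genus $g\ge 2$ and every non-orientable genus $\overline g\ge 4$ (including odd $\overline g$, which the paper handles by a separate crosscap-insertion step inside a triangular face), the inequality $f(\Sigma)\ge 3-\chi(\Sigma)$ for $\chi(\Sigma)<-1$ is unproved.
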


The lower bound is obtained by construction.
In order to verify the upper bound,
we introduce the concept of dual curve
for 
 revisits, which turns out to be very useful
in bounding the number of revisits and the number of
different homotopy classes of $(x,y)$-paths. In particular,
we prove the following result.

\begin{thm}\label{thm:maintoo}
Let $G$ be a graph polyhedrally embedded in a surface $\Sigma$, and $x$ and $y$ be two non-cofacial vertices. If there exist three internally disjoint $(x,y)$-paths which are homotopic to each other, then there exists a non-revisiting $(x,y)$-path.
\end{thm}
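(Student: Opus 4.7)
The plan is to restrict attention to the disk bounded by $P_1 \cup P_3$ and find a $W_v$-path by an extremal argument inside it. I would first set up the topology: because $P_1$ and $P_3$ are homotopic rel $\{x,y\}$ and internally disjoint, $P_1 \cup P_3$ is a null-homotopic simple closed curve bounding a closed disk $D \subseteq \Sigma$; a standard fact about three pairwise homotopic internally disjoint arcs with common endpoints lets me relabel so that $P_2 \setminus \{x,y\} \subseteq D^{\circ}$, and $P_2$ then splits $D$ into two closed sub-disks $D_{12}$ and $D_{23}$, bounded respectively by $P_1 \cup P_2$ and $P_2 \cup P_3$.

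Next, I would establish two key observations about any path $P$ in the family $\mathcal{P}$ of $(x,y)$-paths whose interior lies in $D^{\circ}$ (this family is nonempty, containing $P_2$). First, the polyhedral hypothesis implies that if such a $P$ revisits a face $F$ of $G$, then $F^{\circ}$ lies entirely on one side of $P$ inside $D$: since $F$ is a closed disk and $F \cap P \subseteq \partial F$ is a disjoint union of arcs, $F \setminus P$ is connected, which forces $F^{\circ}$ onto a single side of $P$. Second, because $x, y$ are non-cofacial, no face of $G$ lying outside $D$ can be revisited by any $P \in \mathcal{P}$, since such a revisit would require both $x$ and $y$ on a common face boundary.

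With these tools, choose $P^* \in \mathcal{P}$ minimizing the count of faces of $G$ contained in $\overline{D_1^{P^*}}$, the $P_1$-side of $P^*$. I claim $P^*$ is a $W_v$-path. Otherwise $P^*$ revisits some face $F$; by the observations above, $F$ lies on one side of $P^*$ in $D$, and we may assume $F \subseteq \overline{D_1^{P^*}}$. Writing $P^* \cap \partial F = A_1 \cup A_2$, with $B_1, B_2$ the two complementary arcs of $\partial F$, one re-routes $P^*$ across $F$ by replacing its segment between the outer endpoints of $A_1, A_2$ with an appropriate $B_i$, producing $P^{**} \in \mathcal{P}$ with $F$ moved to the $P_3$-side, contradicting the minimality of $P^*$.

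The main obstacle is making the re-routing precise: both $B_1$ and $B_2$ may share vertices or edges with $P_1$, in which case $P^{**}$ would leave $D^{\circ}$ and fall out of $\mathcal{P}$; and if all revisited faces happen to lie on the $P_3$-side of $P^*$, the argument as stated does not immediately yield a contradiction. I expect to resolve this by choosing $F$ innermost on the $P_1$-side (so that the polyhedral property guarantees a valid $B_i$ staying in $D^{\circ}$), and by coupling the minimization on both sides or invoking the dual-curve framework introduced in the paper to argue globally that the extremal $P^*$ cannot retain revisits on either side.
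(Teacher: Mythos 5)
Your setup is sound and matches the paper's framework: restrict to the closed disk $D$ bounded by $P_1\cup P_3$ with $P_2$ in its interior, note that a face whose interior lies outside $D$ cannot be revisited by a path interior to $D$ (it could only meet such a path at $x$ or at $y$, and meeting both would make $x,y$ cofacial), and observe that every revisit by an interior path is to a face inside $D$ and hence is a \emph{contractible} revisit in the paper's terminology. Where the proposal breaks down is the choice of extremal quantity. Minimizing the number of faces of $G$ contained on the $P_1$-side of $P^*$ only lets you push revisited faces from the $P_1$-side to the $P_3$-side; if the minimizer still revisits a face on the $P_3$-side, rerouting across that face moves it to the $P_1$-side and \emph{increases} your count, so minimality gives no contradiction. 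You acknowledge this, but the suggested repairs do not close it: the total number of faces inside $D$ is fixed, so you cannot minimize both sides at once, and choosing $F$ innermost addresses only the validity of the reroute, not the one-sidedness of the invariant.

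The paper's proof (Lemma 3.2, via Lemma 2.2) uses a different and side-symmetric invariant: among all families of three internally disjoint $(x,y)$-paths inside $D$, minimize the \emph{total revisit number} $r_{\mathcal P}=\sum_F\bigl(c(F\cap P)-1\bigr)$. The key fact you are missing --- and it is the heart of the argument --- is why rerouting strictly decreases this quantity: when you replace the subpath of $P^*$ between the two components of $P^*\cap F$ by the appropriate arc of $\partial F$ (taking the revisit whose dual disk is innermost, as you suggest), the new boundary arc cannot itself revisit any face, nor create a new revisit to some other face $F'$, because any face it meets, other than $F$, lies inside the small disk cut off by the old subpath and the new arc; a second meeting of $F'$ with the new path would force $F$ and $F'$ to intersect in two components, contradicting polyhedrality. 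This is what makes the revisit count a valid potential function regardless of which side of $P^*$ the face $F$ lies on. With that lemma in hand, the minimizing family has no contractible revisits at all, so its middle path is a $W_v$-path (the paper in fact gets $|\mathcal P|-2$ disjoint $W_v$-paths from a homotopy class of size $|\mathcal P|$). As written, your argument has a genuine gap that the stated fixes do not fill; replacing the face-count minimization by the revisit-count minimization, together with the polyhedrality argument above, would repair it.
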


The above result says three internally disjoint homotopic $(x,y)$-paths implies the existance of one non-revisiting path.
However, Theorem~\ref{thm:main-general} indicates that, for each surface $\Sigma$ with Euler characteristic $\chi(\Sigma)<-1$, a graph $G$ polyhedrally embedded in $\Sigma$ may not have $W_v$-path between two vertices $x$ and $y$ if  there are less than $3-\chi(\Sigma)$ paths joining them. 
This shows that the non-revisiting path problem is related to the homotopy classes of $(x,y)$-paths. 

Another application of our method
provides a very short proof for the upper bound for 
the face touching number
of a 3-connected graph embedded in a surface, which was originally proved by Sanders \cite{Sand}
using a discharging argument.

Besides the existence of $W_v$-paths, Barnette \cite{B1} also generalized the $W_v$-path result for the plane in a different direction.
He proved that if two vertices of a graph polyhedrally embedded in the plane are non-cofacial, then they are joined by at least three 
internally disjoint $W_v$-paths.
  Richter and Vitray \cite{RV} proved that, in fact, if a graph is embedded in any surface with representativity at least 4,
there are at least two internally disjoint homotopic
$W_v$-paths joining any two non-cofacial vertices.
In this paper, we also derive the following new relationship between the number of internally
disjoint $W_v$-paths joining two vertices $x$ and $y$ and the local connectivity
$\kappa_G(x,y)$.

\begin{thm}\label{thm:main2}
Let $G$ be a graph polyhedrally embedded in a surface $\Sigma$, and $x$ and $y$ be two non-cofacial vertices. Then  $G$ has at least $\kappa_G(x,y)+4\chi(\Sigma)-8$ internally disjoint non-revisiting $(x,y)$-paths.
\end{thm}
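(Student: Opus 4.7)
My plan is to choose a family of $\kappa_G(x,y)$ internally disjoint $(x,y)$-paths that minimizes the total number of revisits, and then use the dual-curve apparatus behind Theorem~\ref{thm:maintoo} to bound this minimum by $8-4\chi(\Sigma)$.

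Setup. Let $k:=\kappa_G(x,y)$ and choose a family $\mathcal{P}=\{P_1,\ldots,P_k\}$ of $k$ internally disjoint $(x,y)$-paths for which the total number $R(\mathcal{P})$ of (path, revisited-face) incidences is minimum. Because each non-$W_v$ path in $\mathcal{P}$ contributes at least one unit to $R(\mathcal{P})$, it suffices to prove $R(\mathcal{P})\le 8-4\chi(\Sigma)$: this forces at most $8-4\chi(\Sigma)$ of the $P_i$ to be non-$W_v$, leaving at least $k+4\chi(\Sigma)-8$ internally disjoint $W_v$-paths in $\mathcal{P}$.

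Dual curves and essentiality. For each revisit $(P_i,F,\{u,v\})$, let $\gamma$ be a dual arc in $\Int(F)$ running from a point near $u$ to a point near $v$ on $\partial F$; concatenating $\gamma$ with the subpath $P_i[u,v]$ gives a simple closed curve $C\subset\Sigma$. I would first show that in any revisit-minimal family every such $C$ is essential. Suppose not: $C$ bounds a disk $D\subset\Sigma$. Internal disjointness of $\mathcal{P}$ together with the polyhedral hypothesis forces every other $P_j$ to enter $D$ only by crossing $\gamma$, and only at vertices of $\partial F$. Applying Klee's planar $W_v$-theorem (in the sharper non-cofacial form due to Barnette~\cite{B1}) to the planar subgraph of $G$ inside $D$ with distinguished endpoints $u,v$, one reroutes $P_i$ along $\partial F$ across $D$ in such a way as to kill the revisit at $F$ without disturbing the other paths or creating new revisits. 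This strictly decreases $R(\mathcal{P})$, a contradiction; hence every revisit-curve $C$ is non-contractible on $\Sigma$.

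Topological bound. Partition the revisit-curves $\{C_r\}$ by their class in $H_1(\Sigma;\mathbb{Z}/2)$, a group of rank at most $2-\chi(\Sigma)$. The heart of the proof is then to show that the revisit-minimal family realizes at most four revisit-curves in any single homology class: given five such $C_r$'s sharing a class, two of them together with segments of the corresponding $P_i$'s cobound an annulus (or M\"obius band) $A$, and an analogous planar rerouting inside $A$ produces a new family of internally disjoint $(x,y)$-paths with strictly fewer total revisits. Summing over the at most $2-\chi(\Sigma)$ homology classes then yields $R(\mathcal{P})\le 4\bigl(2-\chi(\Sigma)\bigr)=8-4\chi(\Sigma)$, completing the proof.

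Main obstacle. The hardest step is obtaining the constant $4$ in the per-class count. A naive first-homology argument only yields $R(\mathcal{P})\le 2-\chi(\Sigma)$; squeezing out the extra factor requires a careful case analysis that tracks the orientation of each $C_r$, which of the two endpoints $x,y$ it is anchored near, and on which side of its dual arc it sits, and it is here that the bookkeeping must match up with the extremal constructions witnessing the lower bound $3-\tau(\Sigma)$ in Theorem~\ref{thm:main-general}. This is precisely the dual-curve combinatorics that also produces the factor of $4$ in the upper bound on $f(\Sigma)$, so I expect both bounds to be proved in tandem.
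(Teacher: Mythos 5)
Your first step --- pass to a revisit-minimal family and show that every surviving revisit has a non-contractible dual curve --- is sound and is exactly the paper's Lemma~\ref{lem:contractible}. The gap is in the ``topological bound.'' First, the quantity you propose to bound is the wrong one: the theorem only needs a bound on the number of \emph{paths} that carry at least one revisit, and a single path can revisit many distinct faces non-contractibly (Lemma~\ref{lem:non-contractible} bounds the components of $P\cap F$ for one face $F$, not the number of faces revisited), so there is no reason $R(\mathcal P)\le 8-4\chi(\Sigma)$ should hold. Second, the counting mechanism fails: $H_1(\Sigma;\mathbb{Z}/2)$ has rank $2-\chi(\Sigma)$ but $2^{2-\chi(\Sigma)}-1$ nonzero classes, so ``at most four per class'' summed over homology classes gives an exponential, not linear, bound; and two simple closed curves in the same $\mathbb{Z}/2$-homology class need not be freely homotopic, let alone cobound an annulus, so the ``five curves force an annulus'' reduction does not go through. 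The linear constant $4-2\chi(\Sigma)$ in this subject does not come from counting homology classes at all --- it comes from an Euler-formula count applied to an embedded two-vertex multigraph whose bigonal faces are all non-disks.

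The paper's route is to classify the \emph{paths}, not the revisit curves. The $k=\kappa_G(x,y)$ paths fall into $t$ homotopy classes (rel endpoints), and Lemma~\ref{lem:class} shows $t\le 4-2\chi(\Sigma)$ by exactly the Euler-formula multigraph count just mentioned. Within one homotopy class, all its paths lie in a minimal disk $D$ bounded by two of them; any revisit by a path in the interior of $D$ has its dual curve trapped in a disk and is therefore contractible, hence eliminated by the minimization (Lemma~\ref{lem:homo-Wv}). So each class loses at most its two boundary paths, giving $\sum_i(|\mathcal P_i|-2)=k-2t\ge k+4\chi(\Sigma)-8$. If you want to salvage your outline, replace the homology partition of revisit curves with the homotopy partition of the paths and argue per class; as written, the central counting step is not repairable.
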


The bound in Theorem~\ref{thm:main2} is sharp for the sphere.
This will be proved in Section 3. An
even 
better bound for the projective plane will be given in Section 4. 
\bigskip

\section{Dual curves and contractible revisits}

We begin with some definitions and notation.
Let $G$ be a graph polyhedrally embedded in a surface $\Sigma$ and let $x$ and $y$ be two vertices of $G$.
Let $P$ be a path joining $x$ and $y$.
A face $F$ is {\it revisited} by the path $P$ if $F\cap P$ has at least two components.
Let $c( F\cap P)$ to be the number of components of $F\cap P$.
The {\it total revisit number} of $P$ is $r_P=\sum_{F}(c(F\cap P)-1)$.
Let $S_1, S_2,...,S_k$ be the connected components of $F\cap P$.  Throughout the paper, $S_i$ for some integer $i$ always 
stands for a connected component of the intersection of an $(x,y)$-path and some face.
A pair $\{S_i, S_j\}$ is called a {\it revisit} to $F$ by $P$.
(See Figure~\ref{fig:non-contractible} (left)
where $P$ is represented by the thick edges joining $x$ and $y$ and $F$ is the face exterior to the outside octagon.)

\begin{figure}[!hbtp] \refstepcounter{figure}\label{fig:non-contractible}
\begin{center}
\includegraphics[scale=1.25]{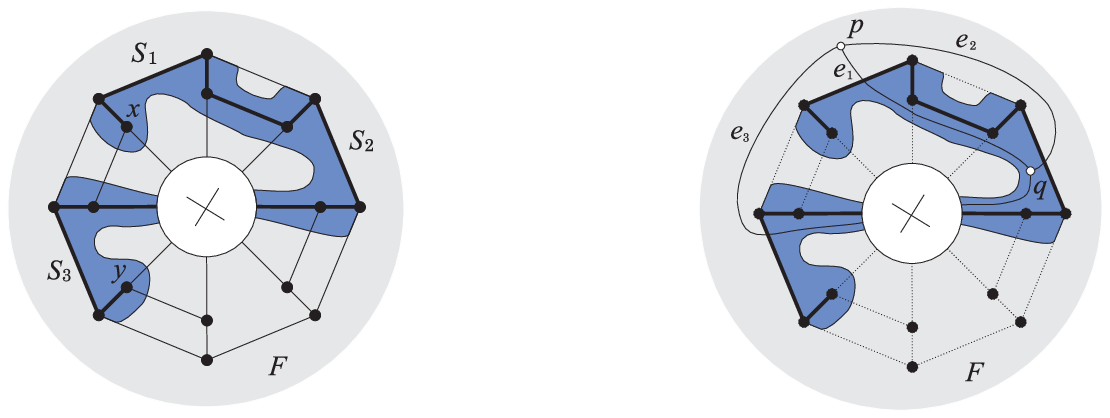}\\
{Figure~\ref{fig:non-contractible}: Dual curves, contractible and non-contractible revisits.}
\end{center}
\end{figure}

For an $(x,y)$-path $P$ joining $x$ and $y$, let $N_{\epsilon} (P)$ be a closed $\epsilon$-neighborhood
of $P$ such that $N_{\epsilon} (P)$ is homotopic to a disk. 
Denote the interior of $F$ by $Int (F)$.
Let $N_{\tp}(P)=N_{\epsilon} (P)\setminus Int (F)$.
Then $N_{\tp}(P)$ is homeomorphic to a closed disk.
(See the dark grey area in Figure~\ref{fig:non-contractible}.)

Let $p$ and $q$ be two points lying in the interiors of $F$ and $N_{\tp}(P)$ respectively.
We construct an auxiliary graph
$H(p,q)$
also
embedded in $\Sigma$ such that $H(p,q)$ has
two vertices $p$ and $q$ and $k$ edges $e_i, 1\le i\le k$,
where $e_i$ joins $p$ and $q$ through $S_i$.
Let $\ell_{ij}=e_i\cup e_j$.
Then $\ell_{ij}$ is a simple closed curve which we will call the {\it dual closed curve} of
the revisit $\{S_i,S_j\}$.
In general, of course, a dual closed curve can be contractible or non-contractible.
A revisit $\{S_i, S_j\}$ is {\it non-contractible} if $\ell_{ij}$ is non-contractible, and {\it contractible}, otherwise. 
For example, see Figure~\ref{fig:non-contractible}
(right) in which
$\ell_{13}$ and $\ell_{23}$ are non-contractible
and $\ell_{12}$
is contractible. 
Therefore $\{S_1,S_2\}$ is
a contractible revisit,
but $\{S_1, S_3\}$
is a non-contractible revisit in Figure~\ref{fig:non-contractible} (left).
Additional definitions and notation will be introduced below as needed.

\begin{lemma}\label{lem:non-contractible}
Let $G$ be a 3-connected graph embedded in a surface $\Sigma$ and let $x$ and $y$ be two vertices of $G$.
Let $P$ be an $(x,y)$-path revisiting a face $F$ such that every revisit is non-contractible.
Then the number of components of $P\cap F$ is at most $4-2\chi(\Sigma)$.
\end{lemma}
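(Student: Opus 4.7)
My plan is to analyze the embedded dipole $H=H(p,q)$ (two vertices $p,q$ and $k$ parallel edges $e_1,\ldots,e_k$) via Euler's formula in the generality that allows non-cellular embeddings. For any graph embedded in a closed surface,
\[
\chi(\Sigma) \;=\; n - m + \sum_{C} \chi(\overline{C}),
\]
where $C$ ranges over the connected components of $\Sigma\setminus H$ and $\overline{C}$ is the compact surface with boundary obtained by closing up $C$. With $n=2$ and $m=k$ this gives $\sum_C \chi(\overline{C}) = \chi(\Sigma)+k-2$. Each summand satisfies $\chi(\overline{C})\le 1$, with equality exactly when $\overline{C}$ is a disk; otherwise $\chi(\overline{C})\le 0$.

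The heart of the argument is a lower bound on facial walk lengths. Since $H$ is bipartite, every facial walk has even length, so length at least $2$. Because $P$ revisits $F$ we have $k\ge 2$, so both $p$ and $q$ have degree $\ge 2$ in $H$, ruling out corners of the form $(e_i,e_i)$ in the rotation at either vertex. Hence any facial walk of length exactly $2$ must have the form $e_i e_j^{-1}$ with $i\neq j$, and its underlying closed curve is precisely $\ell_{ij}$. If such a face were a disk, $\ell_{ij}$ would bound a disk and therefore be contractible, contradicting the hypothesis that every revisit is non-contractible. Consequently, every disk face of the embedding of $H$ in $\Sigma$ has length at least $4$.

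The bound now follows by a double count. Let $a$ and $b$ denote the numbers of disk and non-disk faces respectively. Summing the Euler identity above and using that non-disk faces contribute at most $0$ yields $a\ge \chi(\Sigma)+k-2$. Summing facial walk lengths and using $\sum_C L(C)=2m=2k$, together with the length bounds $L(C)\ge 4$ on disk faces and $L(C)\ge 2$ on non-disk faces, gives $4a+2b\le 2k$, i.e.\ $2a+b\le k$. Chaining,
\[
2\bigl(\chi(\Sigma)+k-2\bigr) \;\le\; 2a \;\le\; k,
\]
which rearranges to $k\le 4-2\chi(\Sigma)$, as required.

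The main subtlety to navigate is the possible non-cellularity of the embedding of $H$ in $\Sigma$: the naive relation $n-m+f=\chi(\Sigma)$ need not hold, but the refined formula using $\chi(\overline{C})$ absorbs non-disk faces cleanly—they contribute at most $0$, which is exactly what the counting needs (the length-$2$ faces $e_ie_j^{-1}$ ruled out as disks can still appear, harmlessly, as annuli or more complex surfaces). Ruling out length-$2$ self-walks $e_ie_i^{-1}$ is then a one-line rotation-system argument using only $k\ge 2$.
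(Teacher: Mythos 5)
Your proof is correct and follows essentially the same route as the paper's: both analyze the embedded dipole $H(p,q)$, observe that a length-$2$ face would have boundary $\ell_{ij}$ and hence cannot be a disk since every revisit is non-contractible, and then combine Euler's formula with the edge--face incidence count $\sum_C L(C)=2k$ to get $k\le 4-2\chi(\Sigma)$. The only difference is technical: where the paper handles non-cellular faces by cutting along the size-$2$ faces and capping off to pass to a surface $\Sigma'$ with $\chi(\Sigma')\ge\chi(\Sigma)+f_2$, you use the refined Euler identity $\chi(\Sigma)=n-m+\sum_C\chi(\overline{C})$ directly, which is a cleaner bookkeeping of the same estimate.
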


\begin{proof}  Assume that $P\cap F=\{S_1,\ldots,S_k\}$ where $k\ge 2$.
Let $H=H(p,q)$ be the auxiliary graph defined as above, namely let $p$ and $q$ be two vertices belonging to 
the interiors of
$F$ and $N_{\tp}(P)$ respectively,
and suppose, for $i=1\ldots k$,
$e_i$ is an edge joining $p$ and $q$ 
through $S_i$.  Then the graph $H$ is embedded in $\Sigma$.
Since every revisit $\{S_i, S_j\}$ is non-contractible, every dual closed curve $\ell_{ij}=e_i\cup e_j$ is non-contractible.  
If $e_i\cup e_j$ bounds a face of $H$, then the interior of the face is not homemorphic to an open disk. In this case, cut the face along $e_i\cup e_j$ and cap off its boundary curve $e_i\cup e_j$. For every face of size 2, apply this operation so that eventually we generate a new surface $\Sigma'$ such that every face of size 2 of $H$ embedded in $\Sigma'$ has its interior homemorphic to an open disk.
Let $f_2$ be the number of faces of $H$ with size 2. Then $\chi(\Sigma')\ge \chi(\Sigma)+f_2$. 
Consider the graph $H$ embedded in the surface $\Sigma'$. Then $H$ has 
$f_2$ faces of size 2 which are closed disks. Let $F(H)$ be the set of all faces of $H$. Then by Euler's formula,
\[2-|E(H)|+|F(H)|\ge \chi(\Sigma')\] where equality holds if the interior of every face is homemorphic to an open disk. 
Let $f_{4^+}$ be the number of faces of $H$ with size at least 4, then $|F(H)|=f_2+f_{4^+}$.   Hence
\[ 2-k+ (f_2+  f_{4^+}) \ge \chi(\Sigma)+f_2.\]
It follows that $\chi(\Sigma)\le 2-k+f_{ 4^+}$. 
Note that $2k=2|E(H)|\ge 2f_2+4f_{4^+}\ge 4f_{4^+}$, and further, $f_{4^+}\le k/2$. 
Combining this inequality with $\chi(\Sigma)\le 2-k+f_{4^+}$,
it then follows that $k\le 4-2\chi(\Sigma)$.
\end{proof}

The {\it face touching number} of two faces $F_1$ and $F_2$ is the number
of components of $F_1 \cap F_2$.
The face touching number of a graph $G$
is the maximum face touching number
over all pairs of faces of $G$. Assume that $F_1\cap F_2=\{S_1,\ldots, S_k\}$ where $S_i$ is a connected component of $F_1\cap F_2$.
If $G$ is 3-connected, every component $S_i$
is a single edge
or vertex
and hence the boundary of $F_1$ contains at least one edge
$xy$ which is not on the boundary of
$F_2$.
Then deleting $xy$ from the boundary of $F_1$ results in a path, which we will denote
by $P$.
Note that $P\cap F_2=F_1\cap F_2$ as $xy\notin F_1\cap F_2$.
By the 3-connectivity of $G$, we can conclude that every revisit $\{S_i,S_j\}$ to $F_2$ by $P$
is non-contractible.
Otherwise, $\{S_i, S_j\}$
contains a
2-vertex-cut of $G$ as
the dual closed curve $\ell_{ij}$ of $\{S_i, S_j\}$ is contractable
and hence separating, a contradiction to the 3-connectivity of $G$. By Lemma~\ref{lem:non-contractible},
we have the following result on face touching numbers of 3-connected graphs,
which was originally proved by Sanders \cite{Sand}
using a discharging argument.

\begin{corollary}[\cite{Sand}]
Let $G$ be a 3-connected graph embedded in a surface $\Sigma$.
Then the face touching number of $G$ is at most $4-2\chi(\Sigma)$.
\end{corollary}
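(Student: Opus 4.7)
The plan is to reduce the corollary directly to Lemma~\ref{lem:non-contractible}, following the outline already sketched in the paragraph preceding the statement. Given two faces $F_1$ and $F_2$ of $G$ with $F_1\cap F_2=\{S_1,\ldots,S_k\}$, the bound is trivial when $k\le 1$, so I would assume $k\ge 2$. Using 3-connectivity, each $S_i$ must be either a single vertex or a single edge, since a shared subpath of length $\ge 2$ would immediately supply a 2-vertex-cut. Consequently the boundary walk of $F_1$ contains some edge $xy$ that does not lie on $\partial F_2$; deleting $xy$ from $\partial F_1$ produces an $(x,y)$-path $P$ with $P\cap F_2=F_1\cap F_2$, again having exactly $k$ components.

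Next I would verify that every revisit $\{S_i,S_j\}$ of $F_2$ by $P$ is non-contractible. Build the auxiliary graph $H(p,q)$ as in the lemma with $p\in\Int(F_2)$ and $q\in\Int(N_{\tp}(P))$, and look at the dual closed curve $\ell_{ij}=e_i\cup e_j$. If $\ell_{ij}$ were contractible, it would bound a disk $D\subset\Sigma$ meeting $G$ only inside $S_i\cup S_j$, which contains at most two vertices. Because $k\ge 2$, at least one additional component $S_\ell$ lies on one side of $D$ while the remainder of $P$ lies on the other, so those at most two vertices would separate $G$, contradicting 3-connectivity. With every revisit non-contractible, Lemma~\ref{lem:non-contractible} applied to $P$ and $F_2$ gives $k\le 4-2\chi(\Sigma)$, as required.

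The step I anticipate as the main obstacle is the rigorous verification that a contractible dual curve really produces a genuine 2-cut. One must argue carefully that genuine $G$-vertices lie on both sides of the disk $D$: the hypothesis $k\ge 2$ places some $S_\ell$ on the $F_2$-side, while the fact that $P$ is a portion of $\partial F_1$ with interior disjoint from $F_2$ places remaining $P$-vertices on the $N_{\tp}(P)$-side. This requires a local analysis at each $S_i$ (a single vertex or edge) to ensure that the two candidates for the cut are precisely the vertices of $S_i\cup S_j$ and that the separation they induce is non-trivial. Once this is pinned down, the rest of the argument is essentially a bookkeeping application of the lemma.
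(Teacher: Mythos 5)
Your argument is exactly the paper's: reduce to Lemma~\ref{lem:non-contractible} by deleting from $\partial F_1$ an edge not on $\partial F_2$ to get a path $P$ with $P\cap F_2=F_1\cap F_2$, and then use 3-connectivity to rule out contractible revisits because a contractible dual curve $\ell_{ij}$ would yield a 2-vertex-cut inside $S_i\cup S_j$. The only blemish is a wording slip (the curve $\ell_{ij}$, not the disk $D$, meets $G$ only in $S_i\cup S_j$, and when $k=2$ there is no ``additional component $S_\ell$''---the nonemptiness of both sides comes from the arcs of $\partial F_2$ and of $P$ between $S_i$ and $S_j$), but this is no less rigorous than the paper's own treatment of the same step.
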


\noindent{\bf Remark.} The bound of Lemma~\ref{lem:non-contractible} is tight in that equality may hold.
Sanders
constructed examples to illustrate that the face touching number of a 3-connected graph
can reach $4-2\chi(\Sigma)$.
Again,
if
one traverses
a path $P$ from the boundary of one of the two faces
of the examples of Sanders, then $P$ revisits the other face $4-2\chi(\Sigma)$ times.

\bigskip

The following lemma gives a condition under which the number of contractible revisits can be reduced.

\begin{lemma} \label{lem:contractible}
Let $G$ be a graph polyhedrally embedded in a surface $\Sigma$, and $x$ and $y$
be two non-cofacial vertices.  Suppose
${\cal P}=\{P_1,\ldots,P_k\}$ $(k\ge 3)$ is a set of $k$ internally disjoint $(x,y)$-paths.
If a face $F$ has a contractible revisit
by path $P_i$, there exists a path $P'_i$ such that
${\cal P'}=(\mathcal P\backslash \{P_i\}) \cup \{P_i'\}$ is a set of $k$ internally disjoint $(x,y)$-paths with
$r_{\cal P'} < r_{\cal P}$.
\end{lemma}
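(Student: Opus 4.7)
The plan is to pick an innermost contractible revisit of $F$ by $P_i$ and reroute $P_i$ along $\partial F$ to eliminate it. Given a contractible revisit $\{S,S'\}$ of $F$ by $P_i$, let $\alpha$ denote the subpath of $P_i$ between $S$ and $S'$, and let $\beta_1,\beta_2$ be the two arcs of $\partial F$ cut off by $S,S'$. Since $\ell_{SS'}$ is contractible and $\partial F$ bounds the closed disk $F$, both $\alpha\cup\beta_1$ and $\alpha\cup\beta_2$ are contractible in $\Sigma$, and one can fix $\beta\in\{\beta_1,\beta_2\}$ so that $\alpha\cup\beta$ bounds a disk $D\subset\Sigma$ with $F\not\subset D$. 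Among all contractible revisits of $F$ by $P_i$ (together with all admissible choices of $\beta$), I would pick one for which the resulting $D$ is minimal with respect to inclusion.

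I would next extract two structural consequences of this minimality: (i) no vertex of any $P_j$ with $j\ne i$ lies in $\Int(D)$ or in the interior of $\beta$; and (ii) no visit of $P_i$ to $F$ other than $S,S'$ meets $\overline D$. The heart of (i) is a standard innermost exchange: internal disjointness forces $P_j$ to avoid $\alpha$, so any intrusion of $P_j$ into $\overline D$ either enters $\Int(D)$ through $\beta$ or touches $\beta$ at an internal vertex, and in each case a subpath of $P_j$ together with a subarc of $\beta$ forms a closed curve bounding a disk $D'\subsetneq D$; invoking the hypothesis $k\ge 3$ then allows one to reinterpret $D'$ as a strictly smaller contractible revisit of $F$ by $P_i$ itself, contradicting minimality. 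Claim (ii) follows analogously by pairing a hypothetical extra visit of $P_i$ inside $\overline D$ with $S$ or $S'$ to produce a smaller configuration.

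Given (i) and (ii), I would define $P_i'$ by replacing the subpath $\alpha$ with the arc $\beta$. Then $P_i'\subset G$ is an $(x,y)$-path, and by (i) it is internally disjoint from each $P_j$ with $j\ne i$. At $F$, the components $S$ and $S'$ merge through $\beta$ into a single component of $P_i'\cap F$, and (ii) ensures no other visit of $P_i$ to $F$ is affected, giving $c(P_i'\cap F)=c(P_i\cap F)-1$.

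The main obstacle is showing that the revisit count does not grow at any face $F^*\ne F$. For this I plan a face-local analysis along $\beta$: every face $F^*$ whose closure meets $\beta$ lies inside $\overline D$ by (i), because $\overline D$ decomposes as a union of face-disks of $G$ once its interior is free of other paths. Each component of $P_i'\cap F^*$ newly created via $\beta$ can then be matched injectively with a component of $P_i\cap F^*$ along $\alpha$ that has just been removed. Combined with the strict drop of one at $F$, this yields $r_{P_i'}\le r_{P_i}-1$; since only $P_i$ is modified, $r_{\mathcal{P}'}<r_{\mathcal{P}}$.
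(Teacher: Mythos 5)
There is a genuine gap, and it sits exactly where the paper's proof has to work hardest. Your claim (i) --- that no $P_j$ with $j\ne i$ meets $\Int(D)$ or the interior of $\beta$ --- is false in general. The step you are missing is a Jordan-curve dichotomy: since no $P_j$ can cross $\alpha$ (internal disjointness) or the interior of $F$, the endpoints $x$ and $y$ are either both outside or both inside the disk attached to the revisit. In the latter case every other path $P_j$ is trapped in $\overline D$, and such a path may perfectly well pass through an internal vertex $w$ of $\beta\subset\partial F$ (every edge of $G$ at $w$ other than the two along $\partial F$ points into $D$, so $P_j$ can enter and leave $w$ from inside $D$). Your rerouted path $(P_i\setminus\alpha)\cup\beta$ then need not be internally disjoint from the $P_j$'s. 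Minimality of $D$ over revisits and choices of $\beta$ cannot rescue you: when $F\cap P_i$ has only the two components $S,S'$ and $x,y$ lie inside, the unique admissible $D$ already contains $x$, $y$ and all the other paths. The paper handles this by a case split: when $x,y$ lie outside $D$ it makes exactly your replacement (subpath between the visits $\to$ arc of $\partial F$ inside $D$), but when $x,y$ lie inside $D$ it instead discards the whole middle segment $S\cup\alpha\cup S'$ and reroutes along the \emph{other} arc of $\partial F$, the one lying outside $D$ and hence away from all the other paths.

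The justification you offer for (i) is also not a proof even in the case it could be true: a disk $D'$ bounded by a subpath of $P_j$ and a subarc of $\beta$ is not a ``contractible revisit of $F$ by $P_i$'' --- revisits of $F$ by $P_i$ are pairs of components of $F\cap P_i$ and are unaffected by where $P_j$ goes --- so $D'$ cannot contradict a minimality taken over such revisits, and the invocation of $k\ge 3$ does no identifiable work (in the paper the only use of the other paths at this point is the Jordan-curve dichotomy above). The remaining ingredients of your plan --- choosing an innermost revisit so that $D$ contains no other component of $F\cap P_i$, merging $S$ and $S'$ into one component at $F$, and using polyhedrality to rule out new revisits along the new arc (a face $F^*$ met by the new arc that were revisited would have to touch $F$ twice) --- are all in the spirit of the paper's proof; what must be added is the inside/outside dichotomy for $x$ and $y$ and the corresponding second rerouting.
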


\begin{proof}
Assume that $F\cap P_i=\{S_1, S_2,\cdots, S_t\}$. Without loss of generality, we may assume that $\{S_1, S_2\}$ is a contractible revisit. Then the
dual curve $\ell_{12}$ bounds a disk $D$. If the disk $D$ contains any
other component of $F\cap P_i$, say $S_j$, then
the dual curve $\ell_{1j}$ bounds another disk $D'$ which is contained inside $D$. Since the number of revisits is finite,
there exists a revisit such that 
its dual curve 
bounds a disk which does not contain any other revisits. 
Therefore,
 without loss of generality, assume that
$D$ does not contain any other component of $F\cap P_i$.

Let $v_1, v_2$ be two endvertices of $S_1$ and $u_1, u_2$ be two endvertices of $S_2$ such that $v_1,v_2, u_1$ and $u_2$ appear on the boundary of $F$ in clockwise order. Note that, it is possible that $v_1=v_2$ and/or $u_1=u_2$.
Assume that the segment of the boundary of $F$ inside the disk $D$ from $v_2$ to $u_1$ is denoted by 
$v_2Fu_1$. 

If one of $x$ and $y$ is outside the disk $D$ and the other is inside the disk $D$, then an $(x,y)$-path $P_j\in \mathcal P$ with $j\ne i$ will intersect the boundary of disk $D$ by the Jordan Curve Theorem; in other words, $P_j$ intersects $P_i$, contradicting
the fact
that $P_i$ and $P_j$ are internally disjoint.
Hence 
$x$ and $y$ are either 
both
outside the disk $D$ or
both
inside the disk $D$.

First, assume that both $x$ and $y$ are outside the disk $D$.
Then every $P_j\in \mathcal P$ with $j\ne i$ is disjoint from $v_2Fu_1$.
Further, assume that $P_i$ 
is traversed 
from $x$ to $v_1$ first and then $v_2$. 
By the definition of dual curve, the segment of path $P_i$ from $S_1$ to $S_2$ together with $v_2Fu_1$ 
forms a curve homotopic to $\ell_{12}$. 
Since $y$ is outside 
$D$, it follows that $P_i$ passes through $u_1$ first and then $u_2$.
Let $v_2P_iu_1$ stand for the subpath of $P_i$ joining $v_2$ and $u_1$, and let $P_i'=(P_i\backslash v_2P_iu_1)\cup v_2Fu_1$.
Then $P_i'$ is internally disjoint from $P_j\in \mathcal P$ with $j\ne i$.
Since $y$ is outside of the disk $D$,  the segment of $P_i\backslash v_2P_iu_1$ from $u_2$ to $y$ does not intersect the cycle $v_2Fu_1\cup v_2P_iu_1$.
Note that every face visited by $v_1Fu_1$, except $F$, 
lies inside 
the disk bounded by $v_2Fu_1\cup v_2P_iu_1$.
It
then
follows that $v_1Fu_1$ does not revisit any other face $F'$, for if there were such a revisit, the two faces $F$ and $F'$ would touch twice,  
contradicting the fact
that $G$ is polyhedrally embedded in $\Sigma$. So $r_{{\cal P}'_{1}}<r_{\cal P}$.

So in the following, assume that both $x$ and $y$ are inside 
the disk $D$. 
Then all other $(x,y)$-paths $P_j\in \mathcal P$ with $j\ne i$ are inside 
$D$, for otherwise, $P_j$ intersects $P_i$, a contradiction
of the fact that
$P_i$ and $P_j$ are internally disjoint. 
Now let $P_i'=(P_i\backslash v_1P_iu_2)\cup u_2Fv_1$.
Then $P_i'$ is disjoint from $P_j$ since $u_2Fv_1$ is outside
$D$.
Any face $F'$ visited by $u_2Fv_1$ is outside
$D$ and is not visited by
the
segments $P_i\backslash v_1P_iu_2$ which are inside 
$D$.
So $u_1Fv_1$ does not revisit any face of $G$ since $G$ is polyhedrally embedded in $\Sigma$. 
Therefore  $r_{{\cal P}'_{1}}<r_{\cal P}$. This completes the proof. 
\end{proof}

If $G$ is polyhedrally embedded in the plane, then every revisit to a face by a path is contractible.
So the following result,
which strengthens a classical result of Barnette on $W_v$-paths (\cite{B1}), is an immediate corollary of
Lemma~\ref{lem:contractible}.

\begin{thm}\label{thm:plane}
Let $G$ be a graph polyhedrally embedded in the sphere and $x,y$ two non-cofacial vertices of $G$.
Then there are at least $\kappa_G(x,y)$ internally
disjoint $W_v$-paths joining $x$ and $y$.
\end{thm}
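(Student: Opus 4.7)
The plan is to combine Menger's theorem with an induction on the total revisit number, using the crucial observation that the sphere has trivial fundamental group.

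First, since every graph polyhedrally embedded in a surface is $3$-connected, we have $\kappa_G(x,y) \geq 3$, so by Menger's theorem there exists a collection $\mathcal{P} = \{P_1, \ldots, P_\kappa\}$ of $\kappa := \kappa_G(x,y)$ internally disjoint $(x,y)$-paths. Define the total revisit number of the collection as $r_{\mathcal P} := \sum_{i=1}^{\kappa} r_{P_i}$, a non-negative integer. If $r_{\mathcal P} = 0$, then every $P_i$ is already a $W_v$-path and we are done, so assume $r_{\mathcal P} > 0$.

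The key observation is that the sphere is simply connected, so every simple closed curve in $\Sigma$ is contractible. In particular, for every revisit $\{S_i, S_j\}$ to any face $F$ by any path $P \in \mathcal P$, the dual closed curve $\ell_{ij}$ is contractible, so every revisit is contractible. Hence, since $r_{\mathcal P} > 0$, there is some $P_i \in \mathcal P$ that contains a contractible revisit to some face $F$. Since $\kappa \geq 3$, we may invoke Lemma~\ref{lem:contractible} to produce a new path $P_i'$ with $\mathcal P' = (\mathcal P \setminus \{P_i\}) \cup \{P_i'\}$ still a collection of $\kappa$ internally disjoint $(x,y)$-paths, yet with $r_{\mathcal P'} < r_{\mathcal P}$.

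Since the total revisit number is a non-negative integer that strictly decreases at each application of Lemma~\ref{lem:contractible}, this procedure terminates after finitely many iterations. At termination, we have a collection of $\kappa$ internally disjoint $(x,y)$-paths with total revisit number $0$; each such path is therefore a $W_v$-path, yielding the desired $\kappa_G(x,y)$ internally disjoint $W_v$-paths joining $x$ and $y$. No genuine obstacle arises in this argument since the sphere's simple connectedness eliminates the only case (non-contractible revisits) that Lemma~\ref{lem:contractible} does not resolve; the entire argument is a straightforward termination argument once that topological fact is noted.
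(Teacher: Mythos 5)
Your proof is correct and follows essentially the same route as the paper: the paper likewise observes that on the sphere every revisit is contractible and then obtains the theorem as an immediate corollary of Lemma~\ref{lem:contractible}. Your write-up merely makes explicit the Menger step and the termination argument on the total revisit number, which the paper leaves implicit.
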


\section{Polyhedral maps on general surfaces}

In this section, we will prove our main results, namely Theorems 1.1, 1.2 and 1.3.

Let $x$ and $y$ be two vertices of a graph $G$ polyhedrally embedded in a surface $\Sigma$.
Two internally disjoint $(x,y)$-paths $P$ and $P'$ are {\em homotopic} to each other if
$P\cup P'$ bounds an open disk of $\Sigma$. Given a family $\mathcal P$ of internally disjoint $(x,y)$-paths, a {\em homotopy class} $\mathcal P'$ of $\mathcal P$ is a subfamily of $\mathcal P$ such that any two paths
of $\mathcal P'$ are homotopic to each other and any path $P\in \mathcal P\backslash \mathcal P'$ is
not homotopic to any path in $\mathcal P'$.
Note that, if $\Sigma$ is the sphere,
then all internally disjoint $(x,y)$-paths are homotopic to each other and hence
there is exactly one homotopy class of any given family of internally disjoint $(x,y)$-paths in this case.

\begin{lemma} \label{lem:class}
Let $G$ be a connected graph embedded in a surface $\Sigma$ different from the sphere, and suppose $x,y\in V(G)$.
Then the number of homotopy classes of a family of internally disjoint $(x,y)$-paths is no more than $4-2\chi(\Sigma)$.
\end{lemma}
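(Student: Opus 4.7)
The plan is to adapt, almost verbatim, the dual-curve/auxiliary-graph argument that proved Lemma~\ref{lem:non-contractible}. I would first pick one representative from each homotopy class, obtaining $m$ pairwise non-homotopic internally disjoint $(x,y)$-paths $P_1,\ldots,P_m$, where $m$ is the number of homotopy classes in the given family. The case $m\le 1$ is immediate because $\chi(\Sigma)\le 1$ whenever $\Sigma$ is not the sphere, so $4-2\chi(\Sigma)\ge 2$. Assume $m\ge 2$ and form the auxiliary multigraph $H$ with vertex set $\{x,y\}$ and $m$ parallel edges $e_1,\ldots,e_m$, where $e_i$ is the curve traced out by $P_i$ in $\Sigma$. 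Internal disjointness of the paths makes this a valid (though possibly non-cellular) embedding of $H$ in $\Sigma$; because $H$ is bipartite with color classes $\{x\}$ and $\{y\}$, every face boundary walk has even length.

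The crucial observation is that each face of $H$ of size $2$ is bounded by some cycle $P_i\cup P_j$, and if such a face's interior were homeomorphic to an open disk then $P_i$ and $P_j$ would together bound a disk and hence be homotopic, contradicting the choice of representatives. Therefore every size-$2$ face of $H$ embedded in $\Sigma$ is non-disk. This is precisely the topological input that the counting step of Lemma~\ref{lem:non-contractible} needs.

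Now I import that counting step. Let $f_2$ denote the number of size-$2$ faces of $H$ and $f_{4^+}$ the number of faces of size at least $4$. Cut each non-disk size-$2$ face along its boundary cycle and cap it off with a disk, producing a surface $\Sigma'$ with $\chi(\Sigma')\ge \chi(\Sigma)+f_2$ in which $H$ still has $f_2+f_{4^+}$ faces. Euler's inequality $2-m+(f_2+f_{4^+})\ge \chi(\Sigma')$ combined with $\chi(\Sigma')\ge \chi(\Sigma)+f_2$ yields $\chi(\Sigma)\le 2-m+f_{4^+}$. Edge-face incidence gives $2m\ge 2f_2+4f_{4^+}$, so $f_{4^+}\le m/2$. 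Substituting produces $\chi(\Sigma)\le 2-m/2$, i.e., $m\le 4-2\chi(\Sigma)$, as required.

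The only substantive step is the translation in the second paragraph: turning the combinatorial hypothesis ``the chosen paths are pairwise non-homotopic'' into the topological statement ``every size-$2$ face of $H$ in $\Sigma$ is non-disk.'' Once that translation is in hand, the remainder of the proof is a mechanical re-use of the machinery already developed for Lemma~\ref{lem:non-contractible}, and I do not anticipate any genuine obstacle.
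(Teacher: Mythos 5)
Your proposal is correct and follows essentially the same route as the paper: the paper also picks one representative per homotopy class, builds the two-vertex auxiliary multigraph whose parallel edges realize those representatives, notes that non-homotopic paths force every $2$-cycle $e_i\cup e_j$ to be non-contractible (so size-$2$ faces are non-disks), and then invokes the same cut-and-cap Euler-characteristic count from Lemma~\ref{lem:non-contractible}. The only difference is that you write out that counting step explicitly where the paper simply cites it as ``an argument similar to that used in the proof of Lemma~\ref{lem:non-contractible}.''
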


\begin{proof}  Let $\mathcal P$ be a family of internally disjoint $(x,y)$-paths and let $k$ be the total number of homotopy classes of $\mathcal P$.
Since $\Sigma$ is not the sphere, $\chi(\Sigma)< 2$.
If $k=1$, then the lemma holds trivially.
So in 
the following, suppose that $k\ge 2$.
Choose one $(x,y)$-path $P_i$ ($i=1,2,...k$) from each homotopy class.
Then no two of $P_1,\ldots, P_k$
are homotopic to each other.

We construct an auxiliary graph $H$ embedded in $\Sigma$ as follows:
let $V(H)=\{x,y\}$ and $E(H)=\{e_1,\ldots,e_k\}$ where $e_i$ is a single edge joining $x$ and $y$
and is homotopic to path $P_i$.
Then $H$ is a bipartite multigraph with two vertices and $k$
edges.
Since $P_i$ is not homotopic to $P_j$ for $j\ne i$, the same is true for $e_i$ and $e_j$.
Therefore, $e_i\cup e_j$ is a non-contractible cycle of $H$. If $e_i\cup e_j$ bounds a face, then the interior of the face is not homemorphic to an open disk. 
An argument similar to that used in the proof of Lemma~\ref{lem:non-contractible} shows that $k=|E(H)|\le 4-2\chi(\Sigma)$.
\end{proof}

\noindent
The following result 
illustrates
an important connection between
homotopy classes and $W_v$-paths.

\begin{lemma}\label{lem:homo-Wv}
Let $G$ be a graph polyhedrally embedded in a surface $\Sigma$, and let $x$ and $y$
be
two
non-cofacial vertices of $G$. Let $\mathcal P$ be
a homotopy class of a family of internally disjoint $(x,y)$-paths of $G$, and assume that $D$
is the minimal disk containing all paths $\mathcal P$. Then $D$ contains at least $|\mathcal P|-2$
internally disjoint non-revisiting $(x,y)$-paths.
\end{lemma}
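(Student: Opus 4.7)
The plan is to index the paths of $\mathcal P=\{P_1,\dots,P_k\}$ (with $k=|\mathcal P|$) so that $\partial D=P_1\cup P_k$ and the $k-2$ remaining \emph{middle} paths $P_2,\dots,P_{k-1}$ lie in the interior of $D$, and then to apply Lemma~\ref{lem:contractible} repeatedly to eliminate every revisit by a middle path. For $2\le i\le k-1$, let $D_i\subseteq D$ denote the closed subdisk bounded by $P_{i-1}\cup P_{i+1}$, so that $P_i\subseteq D_i$. The crucial claim is that every face revisited by a middle path $P_i$ lies inside $D_i$, which forces the dual curve of every such revisit to be contractible.

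To establish the claim, suppose $P_i$ with $2\le i\le k-1$ revisits a face $F$, and let $S_1,\dots,S_t$ be the components of $F\cap P_i$. Because $x$ and $y$ are non-cofacial, at most one $S_j$ equals $\{x\}$ or $\{y\}$, so some $S_j$, say $S_1$, contains an interior point $p$ of $P_i$; note $p\in\Int(D)$. Since $p\in\partial F$ is a limit point of the open disk $\Int(F)$, a sufficiently small neighborhood of $p$, contained in $\Int(D)$, meets $\Int(F)$. Now $\Sigma\setminus G$ decomposes as the disjoint union of the two open sets $\Int(D)\setminus G$ and $\Sigma\setminus(D\cup G)$, and $\Int(F)$ is a connected subset of $\Sigma\setminus G$; hence $\Int(F)\subseteq\Int(D)$ and thus $F\subseteq D$. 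Applying the same connectedness principle to the two components of $D\setminus P_i$ forces $\Int(F)$ to lie on a single side of $P_i$, whence $F\subseteq D_i$. Therefore the dual curve of any revisit of $P_i$ lies in the disk $D_i$ and is contractible.

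With this in hand, apply Lemma~\ref{lem:contractible} iteratively: while some middle path $P_i$ has a revisit, replace it by the $P_i'$ produced by the lemma. Since the rerouting uses the boundary of $F\subseteq D_i$, we have $P_i'\subseteq D_i$; internal disjointness among the $k$ paths is preserved by the lemma, so $P_i'$ remains strictly between $P_{i-1}$ and $P_{i+1}$ and the linear ordering of the family is maintained; the total revisit number strictly decreases at each step. The procedure therefore terminates after finitely many steps, and at termination $P_2,\dots,P_{k-1}$ are all non-revisiting, providing the desired $|\mathcal P|-2$ internally disjoint $W_v$-paths inside $D$.

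The main obstacle lies in the second paragraph's claim that $F\subseteq D_i$. It splits into two parts: the topological argument that $\Int(F)$ must reach into $\Int(D)$ (handled by the connectedness of $\Int(F)$ and the open decomposition of $\Sigma\setminus G$), and the degenerate situation where a component of $F\cap P_i$ is just $\{x\}$ or $\{y\}$. The non-cofaciality hypothesis is precisely what handles this degeneracy by guaranteeing some $S_j$ that contains an interior point of $P_i$; without it, $\Int(F)$ could in principle lie outside $D$ while touching $D$ only at $\{x,y\}$, and the contractibility of the dual curves would fail.
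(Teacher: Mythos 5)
Your proof is correct and follows essentially the same route as the paper's: both arguments reduce to the $k-2$ ``middle'' paths strictly inside $D$, observe that any face revisited by such a path must lie inside $D$ so that the dual curve of the revisit is contractible, and then invoke Lemma~\ref{lem:contractible} to eliminate all such revisits (the paper phrases this as choosing a family of minimal total revisit number, you phrase it as iterated strict descent --- these are interchangeable). Your second paragraph merely spells out, via the non-cofaciality of $x$ and $y$ and the connectedness of $\Int(F)$, a containment step that the paper asserts without detail; this is a welcome elaboration, not a departure.
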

\begin{proof}
Assume that $\mathcal P=\{P_1,\ldots, P_k\}$ is a homotopy class of a family of internally disjoint $(x,y)$-paths.
If
$k\le 2$, the lemma holds trivially. So suppose that $k\ge 3$. As $D$ is the minimal disk
containing all paths in $\mathcal P$, we can conclude that $D$ is bounded by two paths
in $\mathcal P$, say $P_1$ and $P_k$.

Note that all $(x,y)$-paths contained in $D$ are homotopic.
We choose a set of
$k$ internally disjoint $(x,y)$-paths in $D$, denoted by $\mathcal P'=\{P_1',\ldots, P_k'\}$,
such that the total revisit number of $\mathcal P'$ is minimal.
Relabeling if necessary, we may assume
that all
paths in $\mathcal P'$ are contained in a disk bounded by $P_1'$ and $P_k'$.
Every revisit to a face $F$
in $D$ by an $(x,y)$-path in $\mathcal P'$ is contractible.
By Lemma~\ref{lem:contractible}
and the choice of $\mathcal P'$, all paths in $\mathcal P'$ except $P_1'$ and $P_k'$, are $W_v$-paths joining
$x$ and $y$.
It then follows immediately that $D$ contains at least $|\mathcal P|-2$ non-revisiting $(x,y)$-paths.
\end{proof}

Theorem~\ref{thm:maintoo} follows immediately from Lemma~\ref{lem:homo-Wv}. Now, we are going to prove Theorem~\ref{thm:main2}.\bigskip

\noindent{\it Proof of Theorem~\ref{thm:main2}.}
Assume that $\kappa_G(x,y)=k$. Then $G$ has $k$ internally disjoint $(x,y)$-paths. Assume
these $k$ internally disjoint $(x,y)$-paths can be
partitioned into $t$ homotopy classes
$\mathcal P_1, \ldots, \mathcal P_t$.
It follows from Lemma~\ref{lem:class},
that $t\le 4-2\chi(\Sigma)$. Let $D_i$ be the minimal disk containing all paths in
$\mathcal P_i$. By Lemma~\ref{lem:homo-Wv}, each disk $D_i$
contains at least $|\mathcal P_i|-2$ internally disjoint
non-revisiting $(x,y)$-paths.
Therefore,
the total number of internally
disjoint non-revisiting $(x,y)$-paths is at least
\[\sum\limits_{i=1}^t (|\mathcal P_i|-2)=\sum_{i=1}^t |\mathcal P_i| -2t\ge k+4\chi(\Sigma)-8.\]
\qed \medskip

Theorem~\ref{thm:main2}
guarantees that if the local connectivity $\kappa_G(x,y)$ is large enough, then $G$
has a non-revisiting  $(x,y)$-path.
It
would be interesting to find the
{\it minimum} local connectivity for graphs polyhedrally embedded
in a surface $\Sigma$ which guarantees
the existence of a non-revisiting $(x,y)$-path.
Define $f(\Sigma)$ to be the smallest number $k$ such that for any graph $G$ polyhedrally embedded in $\Sigma$
and any two vertices $x$ and $y$ in $G$,
if $\kappa_G(x,y)\ge k$, then $G$ has at least one non-revisiting $(x,y)$-path.
By Theorem~\ref{thm:plane}, $f(\Sigma)=3$ if $\Sigma$ is the sphere. 
The
results obtained in \cite{K1, B1, B2, B3, PV2} show that $f(\Sigma)=3$ for surfaces with $\chi(\Sigma)\ge 0$. 
For all other surfaces $\Sigma$, the exact value of $f(\Sigma)$ remains unknown.
By Theorem~\ref{thm:main2}, we have the following result which provides an upper bound
of $f(\Sigma)$ for surfaces $\Sigma$ with $\chi(\Sigma)<0$.   

\begin{corollary} \label{cor:main}
Let $\Sigma$ be a surface. Then $f(\Sigma)=3$ if $\chi(\Sigma)\ge 0$ and $f(\Sigma)\le 9-4\chi(\Sigma)$ otherwise.
\end{corollary}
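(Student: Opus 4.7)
The plan is to dispatch the two cases $\chi(\Sigma)\ge 0$ and $\chi(\Sigma)<0$ separately, using the known $W_v$-path results for low genus surfaces for the first and Theorem~\ref{thm:main2} for the second. Throughout, the cofacial case is trivial because, as observed in the introduction, if $x$ and $y$ lie on a common face then either the edge $xy$ or one of the two boundary paths of that face is a $W_v$-path; so in every case it suffices to consider non-cofacial pairs $x,y$.

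For $\chi(\Sigma)\ge 0$, the relevant surfaces are the sphere, projective plane, torus, and Klein bottle. Klee~\cite{K1} settled the sphere, Barnette~\cite{B2,B3} the projective plane and torus, and Pulapaka and Vince~\cite{PV2} the Klein bottle. In each of these papers, it is proved that \emph{every} pair of vertices of a polyhedrally embedded graph is joined by a $W_v$-path, i.e.\ no connectivity assumption is even needed. Since polyhedrally embedded graphs are automatically $3$-connected, so $\kappa_G(x,y)\ge 3$ for all pairs of vertices, we obtain $f(\Sigma)\le 3$. The lower bound $f(\Sigma)\ge 3$ is immediate from the requirement that $G$ be $3$-connected (taking $\kappa_G(x,y)\ge f(\Sigma)$ is vacuous otherwise). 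Hence $f(\Sigma)=3$ for these surfaces.

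For $\chi(\Sigma)<0$, I would invoke Theorem~\ref{thm:main2} directly. Let $G$ be polyhedrally embedded in $\Sigma$ and let $x,y\in V(G)$ be non-cofacial with $\kappa_G(x,y)\ge 9-4\chi(\Sigma)$. Then by Theorem~\ref{thm:main2}, the number of internally disjoint non-revisiting $(x,y)$-paths is at least
\[
\kappa_G(x,y)+4\chi(\Sigma)-8 \;\ge\; (9-4\chi(\Sigma))+4\chi(\Sigma)-8 \;=\; 1,
\]
so at least one $W_v$-path joining $x$ and $y$ exists. Combined with the cofacial observation above, this gives $f(\Sigma)\le 9-4\chi(\Sigma)$.

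There is essentially no obstacle here beyond bookkeeping: the whole content is packaged in Theorem~\ref{thm:main2} and the cited low-genus results. The only subtlety worth flagging is that Theorem~\ref{thm:main2} is stated for non-cofacial pairs, so one must explicitly dispose of the cofacial case using the trivial $W_v$-paths supplied by the face boundary; and one should note that the quantity $9-4\chi(\Sigma)$ is precisely designed so that the bound in Theorem~\ref{thm:main2} yields the number $1$, i.e.\ the existence (rather than a larger count) of a $W_v$-path.
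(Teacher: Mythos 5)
Your proposal is correct and follows essentially the same route as the paper: for $\chi(\Sigma)\ge 0$ the paper likewise cites Klee, Barnette, and Pulapaka--Vince (together with the $3$-connectivity of polyhedrally embedded graphs) to get $f(\Sigma)=3$, and for $\chi(\Sigma)<0$ it deduces the bound $f(\Sigma)\le 9-4\chi(\Sigma)$ by plugging $\kappa_G(x,y)\ge 9-4\chi(\Sigma)$ into Theorem~\ref{thm:main2} exactly as you do. Your explicit handling of the cofacial case is a minor bit of extra care that the paper leaves implicit.
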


In the following, we are going
to construct examples 
in which the lower bound
given in Theorem~\ref{thm:main-general} holds.
Since $G$ is polyhedrally embedded in a surface $\Sigma$, it follows that $G$ is 3-connected and
therefore $f(\Sigma)\ge 3$. The lower bound
in Theorem~\ref{thm:main-general} holds trivially for surfaces $\Sigma$ with $\chi(\Sigma)\ge -1$. 
Now, we construct examples to illustrate the
even
better lower bound $f(\Sigma)\ge  3-\chi(\Sigma)$
for surfaces with $\chi(\Sigma)<-1$, i.e., all orientable surfaces with genus $g\ge 2$ and non-orientable 
surfaces with genus $\overline g\ge 4$.
 
 \medskip

For each orientable genus $g\ge 2$ and each non-orientable genus $\overline{g}\ge 4$,
we now exhibit examples of graphs with these genera
having
the property that they contain vertices $x$ and $y$
such that
$\kappa_G(x,y)=2g =2-\chi(\Sigma)$ in the orientable case
and
such that
$\kappa_G(x,y)=\overline{g}=2-\chi(\Sigma)$ in the non-orientable case,
but there is no $W_v$-path joining $x$ and $y$.\medskip

\begin{figure}[!hbtp] \refstepcounter{figure}\label{fig:orient}
\begin{center}
\includegraphics[scale=1.2]{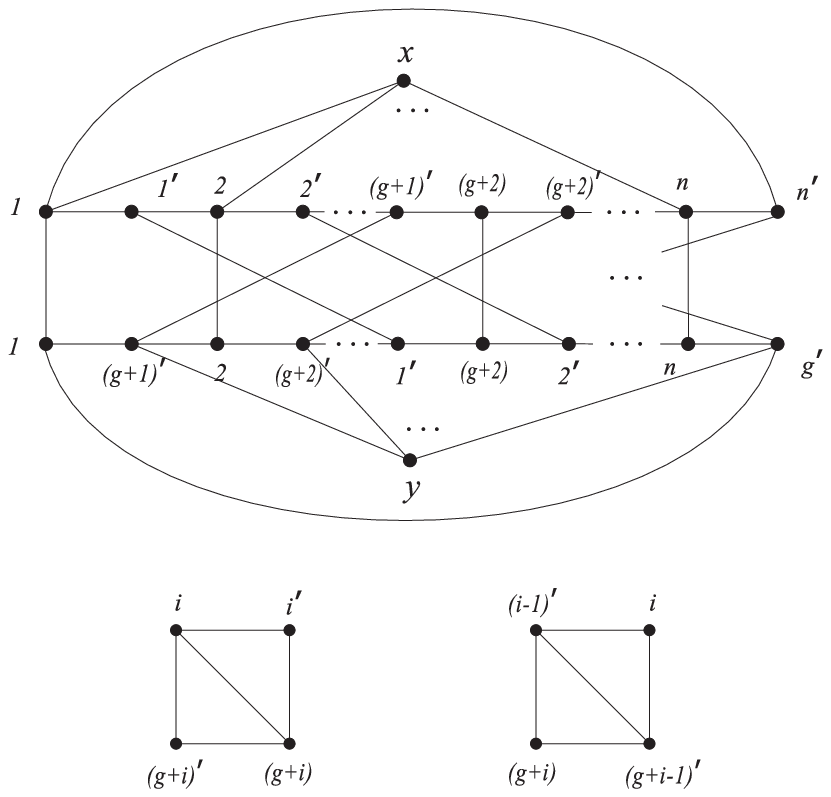}\\
{Figure~\ref{fig:orient}: Orientable surfaces.}
\end{center}
\end{figure}

First we consider the orientable case for all $g\ge 2$. Let $x$ and $y$ be two distinct vertices.
Let $C_x=11'22'33'\cdots nn'1$ be a cycle on $2n$ vertices such that $N(x)=\{1,2,\ldots,n\}$, where $n=2g$.
Similarly,
let $C_y=1(g+1)'2(g+2)'\cdots gn'(g+1)1'\cdots ng'1$ be a second
$(2n)$-cycle such that $N(y)=\{1',2',\ldots,n'\}$, again where $n=2g$.
For $i=1,\ldots,n$, join vertex $i$ in $C_x$ and $i\in C_y$ as well as
$i'$ in $C_x$ and $i'\in C_y$.
Call the resulting graph $H_1$. (See Figure~\ref{fig:orient}.) The cyclic orders of
edges incident with vertices as shown
in Figure~\ref{fig:orient} define a {\it rotation} scheme which represents an embedding  of $H_1$ in an orientable surface $\Sigma$.
By Euler's formula, the surface $\Sigma$ has genus $g$. The 
faces of the embedding derived from the rotation system shown in Figure~\ref{fig:orient}
are of the form $xii'(i+1)x$, $yi'(g+i+1)(i+1)'y$,  and  $ii'i'(g+i)(g+i)(g+i)'(g+i)'i$ where
all integers are taken modulo $n$.

Now envision the graph $H_1$ embedded in 
this
surface $\Sigma$.
Next contract all edges of the form $ii$ and $i'i'$.
Call the resulting graph $H_2$. Then $H_2$ inherits the embedding of $H_1$  in the surface $\Sigma$
such that each facial 8-cycle in $H_1$ of the form
$ii'i'(g+i)(g+i)(g+i)'(g+i)'i$ in $H_1$ corresponds to a facial 4-cycle
$ii'(g+i)(g+i)'i$ in $H_2$, and other facial 4-cycles of $H_1$ are still facial 4-cycles of $H_2$.
Now the graph $H_2$ is embedded in the surface $\Sigma$ where every face is bounded by a 4-cycle.
This embedding is not polyhedral because, for example, the 4-faces $ii'(g+i)(g+i)'i$ and $(i-1)'i(g+i-1)'(g+i)(i-1)'$ share vertices $i$ and $(g+i)$ which are two components of the intersection of the face boundaries. So we add some additional diagonal edges to some of these paired 4-cycles as follows:
for each $i =1,\ldots,n$, to the cycle $ii'(g+i)(g+i)'i$ we add the diagonal edge $i(g+i)$ and to the cycle $(i-1)'i(g+i-1)'(g+i)(i-1)'$ we
add the diagonal edge $(i-1)'(g+i-1)'$.
(See Figure 2.)

The resulting graph on $4g+2$ vertices, which we will call
$\Gamma_g$, is then polyhedrally embedded in the orientable surface $\Sigma$
of genus $g$ and $\kappa_G(x,y)=2g$.
Note that an $(x,y)$-path starting with an edge $xi$ revisits either a face incident with $x$ (for example $x(g+i)(g+i)'(g+i+1)x$ or $x(g+i)(g+i-1)'(g+i-1)x$) or a face  incident with $y$ (for example $i(g+i)'y(g+i-1)'i$). So 
 in
$\Gamma_g$ there are no $W_v$-paths joining $x$ and $y$. \medskip

\begin{figure}[!hbtp] \refstepcounter{figure}\label{fig:non-ori}
\begin{center}
\includegraphics[scale=1.2]{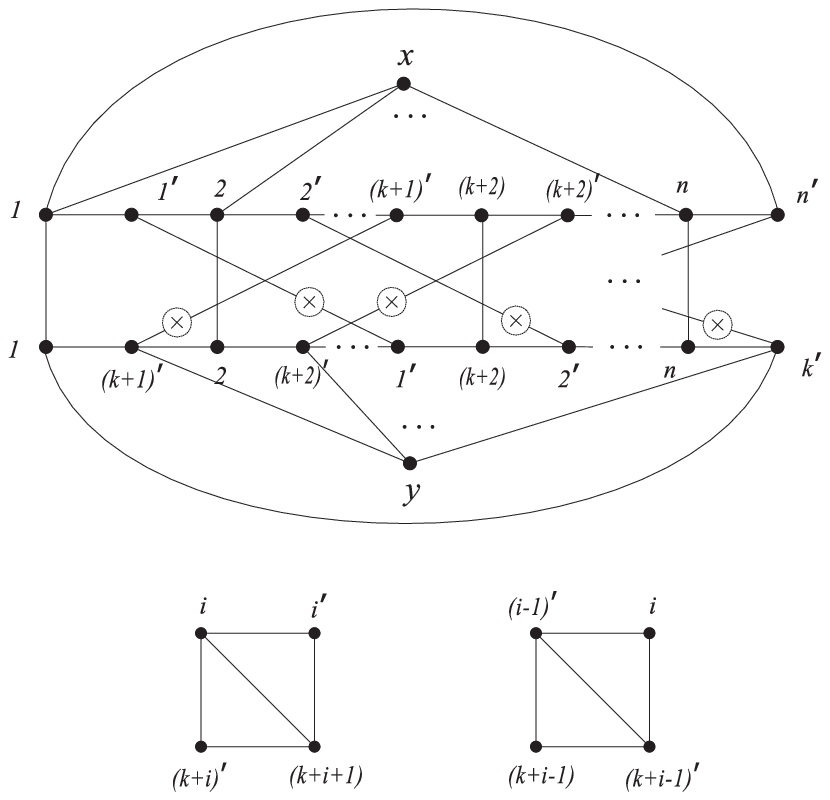}\\
{Figure~\ref{fig:non-ori}: Non-orientable surfaces.}
\end{center}
\end{figure}

We now turn to the non-orientable case. For the non-orientable surface $\Sigma$ where $\chi(\Sigma)=2-\overline g$ is {\it even} (i.e., $\overline g=2k$), we proceed as follows.\par

Let $x$ and $y$ be distinct vertices, and $C_x=11'22'\cdots nn'1$ be a $2n$-cycle with
$N(x)=\{1,2,\ldots,n\}$
and let
$C_y=1(k+1)'2(k+2)'\cdots kn'(k+1)1'\cdots n k'1$
be a second $2n$-cycle with
$N(y)=\{1',2',\ldots,n'\}$ where $n=\overline g$.
Join vertex $i$ of $C_x$ to $i$ of $C_y$ and vertex $i'$ of $C_x$ to $i'$ of $C_y$.
As in the orientable case, we also add all ``vertical'' edges of the form $ii$ and $i'i'$ and call the resulting graph $H_1$. 
This time, however, we position a separate crosscap on each the edges $1'1', 2'2',\ldots, n'n'$ in $H_1$
to obtain a non-orientable graph $\overline{H_1}$.  The rotation scheme as shown in
Figure~\ref{fig:non-ori} represents an embedding of $H_1$ in a non-orientable surface $\Sigma$. 
Again by Euler's 
formula, the surface $\Sigma$ has non-orientable genus $\bar g$.

We contract all edges of the form $ii$ and $i'i'$.
We denote by 
$\overline{H_2}$ the resulting graph embedded in the surface $\Sigma$.
In so doing, the 8-faces of the form
$ii'i'(k+1+i)(k+1+i)(k+i)'(k+i)'i$ and
$i'(i+1)(i+1)(k+i)'(k+i)'(k+i)(k+i)i'$
contract to the 4-faces
$ii'(k+1+i)(k+i)'i$ and $i'(i+1)(k+i)'(k+i)i'$
respectively.
As before, we obtain pairs of quadrilaterals which share two vertices on their boundaries which are not consecutive on either boundary.
So again we add the diagonal edges $i(k+i+1)$ and $(i-1)'(k+i-1)'$ to $\overline{H_2}$ to obtain a polyhedrally embedded graph which we shall call
$\Gamma_{\overline g}$.
In this embedded graph 
$\Gamma_{\overline g}$,
$\kappa_{\overline G}(x,y)=\overline g$. Again, in $\Gamma_{\overline g}$, an $(x,y)$-path
revisits either a face incident with $x$ or a face incident with $y$. Therefore, there is no $W_v$-path joining $x$ and $y$ 
in $\Gamma_{\overline g}$. \par

\begin{figure}[!hbtp] \refstepcounter{figure}\label{fig:cross-cap}
\begin{center}
\includegraphics[scale=1.3]{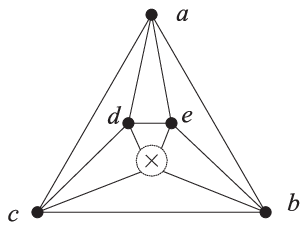}\\
{Figure~\ref{fig:cross-cap}: The added crosscap.}
\end{center}
\end{figure}

We can modify
the above construction for even non-orientable genera in order to treat the case when the non-orientable genus is {\it odd}
as follows.
Begin with the embedded graph $\overline{H_2}$ of even non-orientable genus $\overline g$ and select any triangular face $F$.
Denote it by  $F=abca$.
Now add two new adjacent vertices $d$ and $e$ and a new crosscap to the interior of $F$.
Join $a$ to $d$ and $e$, $b$ to $e$ and $c$ to $d$.
Finally, join $c$ to $e$ and $b$ to $d$ through the crosscap. (See Figure~\ref{fig:cross-cap}.)
The
graph we seek is obtained
from the original $\overline{H_2}$ by
adding the new crosscap and the above seven new edges.
This graph, then,
has (odd) non-orientable genus $\overline g +1$.  \medskip

The above examples show that $k_G(x,y)=2-\chi(\Sigma)$ does not guarantee the existence of
a $W_v$-path joining $x$ and $y$. Therefore  $f(\Sigma)\ge 3-\chi(\Sigma)$ for surfaces $\Sigma$
with $\chi(\Sigma)< -1$.
By Corollary~\ref{cor:main}, Theorem~\ref{thm:main-general} follows.\medskip

% %%%%%%%%%%%%%%%
\section{Polyhedral maps on the projective plane}
%%%%%%%%%%%%%%%%

In this section, we obtain a sharp lower bound for the number of internally disjoint non-revisiting
$(x,y)$-paths for graphs polyhedrally embedded in the projective plane which improves
the bound given in Theorem~\ref{thm:main2}. Barnette's result \cite{B2} for the projective plane is
a direct corollary of this result.

In the following, two closed curves $\alpha$ and $\beta$ are {\em homotopically disjoint} if there exist two disjoint closed
curves $\alpha'$ and $\beta'$ such that $\alpha$ is homotopic to $\alpha'$ and $\beta$ is homotopic
to $\beta'$.

\begin{thm}\label{thm:proj}
Let $G$ be a graph polyhedrally embedded in the projective plane and suppose $x$ and $y$ are two non-cofacial vertices.
Then there are at least $\kappa_G(x,y)-2$ internally disjoint $W_v$-paths joining $x$ and $y$.
\end{thm}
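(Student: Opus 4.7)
The plan is to use a cutting argument that reduces the two-class case in the projective plane to a planar situation handled by Lemma~\ref{lem:contractible}. By Lemma~\ref{lem:class} applied with $\chi(\Sigma)=1$, any family of internally disjoint $(x,y)$-paths in the projective plane has at most $4-2=2$ homotopy classes.

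Let $\mathcal P$ be a family of $\kappa:=\kappa_G(x,y)$ internally disjoint $(x,y)$-paths. If all paths of $\mathcal P$ lie in a single homotopy class, Lemma~\ref{lem:homo-Wv} applied to the minimal disk containing $\mathcal P$ already yields $\kappa-2$ internally disjoint $W_v$-paths, and we are done. Otherwise $\mathcal P$ splits into exactly two homotopy classes, and we pick any representative $P$ from one class and $Q$ from the other. Since $P$ and $Q$ are in different classes, $\gamma:=P\cup Q$ is a non-contractible simple closed curve in $\Sigma$. Cutting $\Sigma$ along $\gamma$ produces a disk $D^*$ whose boundary is $\gamma$ traversed twice, and the remaining $\kappa-2$ paths of $\mathcal P\setminus\{P,Q\}$ all lie in the interior of $D^*$.

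The key topological fact is that no face of $G$ crosses an edge of $G$, so every face of $G$ lies on a single side of $\gamma$ in $\Sigma$ and lifts bijectively to a face in $D^*$. Consequently face revisits by an interior path are the same in $\Sigma$ and in $D^*$, and any dual revisit curve of an interior path lies in the simply connected region $\Sigma\setminus\gamma$ (the interior of $D^*$) and is therefore contractible in $\Sigma$. Now choose a family of $\kappa$ internally disjoint $(x,y)$-paths containing $P$ and $Q$ with minimum total revisit number. If any of the $\kappa-2$ interior paths still had a revisit, that revisit would be contractible, so Lemma~\ref{lem:contractible} would produce a family with strictly smaller total revisit number, contradicting minimality. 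Hence the $\kappa-2$ interior paths are non-revisiting $(x,y)$-paths, giving the desired bound. Barnette's existence result~\cite{B2} for the projective plane is then an immediate corollary, since polyhedrally embedded graphs are $3$-connected, forcing $\kappa-2\ge 1$.

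The main obstacle is ensuring that the minimization in the two-class case keeps $P$ and $Q$ fixed as the boundary paths of $D^*$, and that the modifications stay inside $D^*$. This is resolved by observing that Lemma~\ref{lem:contractible}'s modification of an interior path $P_i$ replaces one arc of $P_i$ with a face-boundary arc while (by the lemma's own guarantee) maintaining internal disjointness from every other path in the family, including $P$ and $Q$. Therefore the new path never shares an internal vertex with $\gamma$ and remains in the interior of $D^*$, and the procedure iterates until all interior paths are non-revisiting.
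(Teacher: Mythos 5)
There is a genuine gap. The crux of your argument is the assertion that every revisit by one of the $\kappa-2$ ``interior'' paths has a contractible dual curve because it ``lies in the simply connected region $\Sigma\setminus\gamma$.'' This is false, because the interior paths are not actually contained in $\Sigma\setminus\gamma$: they all share the endpoints $x$ and $y$ with $\gamma=P\cup Q$, and a face they revisit can be incident with $x$ or $y$. Concretely, if $F\cap P_i=\{S_1,S_2\}$ with $S_1=\{x\}$, then the dual curve $\ell_{12}$ is forced to pass through $x\in\gamma$; it does not live in $\Sigma\setminus\gamma$, it can be non-contractible, and Lemma~\ref{lem:contractible} is then powerless to remove the revisit, so your minimization argument stalls. (For the same reason, the faces incident with $x$ or $y$ do not ``lift bijectively'' to $D^*$: they straddle the cut at $x$ and $y$, and the revisit structure is not preserved.) This is not a fringe case — it is exactly the situation the paper's proof is built around: Claim~1 there shows that \emph{every} surviving revisit has a singleton component $\{x\}$ or $\{y\}$, and Claims~2 and the final homotopic-disjointness argument are needed to bound how many of the four ``extreme'' paths can carry such revisits.

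The example in Figure~\ref{fig:3-conn} refutes your conclusion outright. There $\kappa_G(x,y)=6$, the six paths fall into two homotopy classes, and the graph admits only four pairwise internally disjoint $W_v$-paths. Choose your representatives $P$ and $Q$ to be two of the non-revisiting paths lying in different classes; your argument then asserts that the remaining four paths can be rerouted (keeping internal disjointness from $P$ and $Q$) into four non-revisiting paths, which together with $P$ would give five pairwise internally disjoint $W_v$-paths — impossible. Your single-class case (via Lemma~\ref{lem:homo-Wv}) and your observation that at most two classes exist are fine and agree with the paper; what is missing is the entire analysis of non-contractible revisits through $x$ and $y$, which is where the real work of the theorem lies.
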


\begin{proof} Let ${\cal P}=\{P_1,\ldots,P_k\}$
be a family of internally disjoint $(x,y)$-paths such that
$r_{\cal P}=\sum^k_{i=1} r_{P_i}$
is minimum.
If $r_{\cal P}=0$, we are done, so in the following we will assume that $r_{\cal P}> 0$.

Suppose $P_1\in {\cal P}$.
Define ${\cal P}_A$ by ${\cal P}_A=\{P_i\in {\cal P}\vert P_i \ {\rm is\ homotopic\ to\ } P_1\}$.
Trivially, $P_1\in {\cal P}_A$.
Now define ${\cal P}_B$ by ${\cal P}_B={\cal P}-{\cal P}_A$.
Then for any $P_i,P_j\in {\cal P}_A$,
$P_i\cup P_j$ bounds a disk.
Moreover, if $P_i\in {\cal P}_A$ and $P_{\alpha}\in {\cal P}_B$, $P_i\cup P_{\alpha}$ is a non-contractible cycle since
$P_i$ and $P_{\alpha}$ are not homotopic.
Note that there is only one homotopy class of non-contractible simple closed curves on the projective plane since
the fundamental group of this surface is
$\mathbb Z _2$.
So all non-contractible cycles of $G$ are homotopic.
For $P_{\alpha},P_{\beta}\in {\cal P}_B$,
$P_1\cup P_{\alpha}$ is homotopic to $P_1\cup P_{\beta}$.
It follows that $P_{\alpha}$ is homotopic to $P_{\beta}$.
Hence ${\cal P}_B$ is also a homotopy class of internally disjoint $(x,y)$-paths.
\par

Without loss of generality, we may write ${\cal P}_A=\{P_1,\ldots,P_t\}$ and ${\cal P}_B=\{P_{t+1},\ldots,P_k\}$,
and also without loss of generality, we may assume that $|{\cal P}_A|\ge |{\cal P}_B|$.
Note that $k\ge 3$ since $G$ is 3-connected and ${\cal P}_B$ may be empty.
In any case
$t\ge 2$.

Since ${\cal P}_A$ is a homotopy class, $P_i\cup P_j$ bounds a disk, for any two distinct $P_i,P_j\in {\cal P}_A$.
Therefore, all paths in ${\cal P}_A$ are contained in a closed disk $D$ bounded by the union of two paths in this set.
Without loss of generality, let us renumber the paths if necessary, so that these two paths are denoted by $P_1$ and $P_t$. (See 
Figure~\ref{fig:2-class} where the disk $D$ is represented by the shaded region.)
Similarly, we may suppose that paths $P_{t+1}$ and $P_k$ bound a closed disk $D'$ containing all the paths in ${\cal P}_B$.

\begin{figure}[!hbtp] \refstepcounter{figure}\label{fig:2-class}
\begin{center}
\includegraphics[scale=.8]{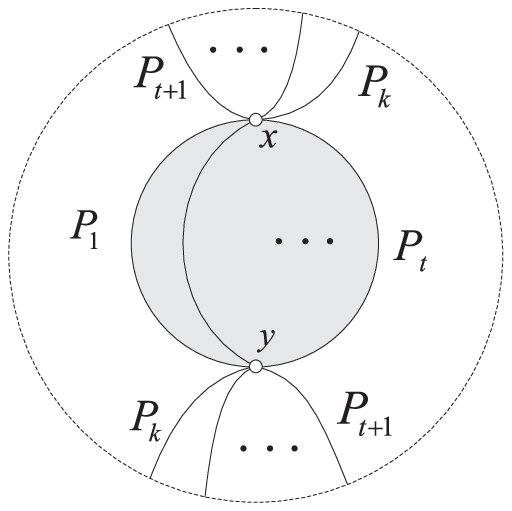}\\
{Figure~\ref{fig:2-class}:  The two homotopy classes ${\cal P}_A$ and ${\cal P}_B$ of all $(x,y)$-paths.}
\end{center}
\end{figure}

By Lemma~\ref{lem:contractible} and the minimality of $r_{\cal P}$, every revisit to any face $F$ by an $(x,y)$-path
in $\cal P$ is non-contractible.
So every face contained in the disk $D$ bounded by $P_1\cup P_t$ (respectively, in the disk $D'$ bounded by $P_{t+1}\cup P_k$)
is not revisited by any path in ${\cal P}$.
Hence a face can only be revisited by $P_1,P_t,P_{t+1}$ or $P_k$.\par

If ${\cal P}_B=\emptyset$, then a face can be revisited only by $P_1$ or $P_t$ ($= P_k$), so in this instance,
there are at least $\kappa_G(x,y)-2$ $W_v$-paths.
So assume that ${\cal P}_B\ne\emptyset$.
Let $F$ be a face revisited by path $P_1$.
By Lemma~\ref{lem:non-contractible}, $F\cap P_1$ has exactly two components $S_1$ and $S_2$.

\medskip

\noindent
{\bf Claim 1:} {\sl One of $S_1$ and $S_2$ is the single vertex $x$ or $y$.}\par
\medskip
\noindent
{\it Proof of Claim 1.}  Suppose to the contrary that $S_1-\{x,y\}$ contains a vertex $u$ and that $S_2-\{x,y\}$ contains a vertex $v$.
The dual closed curve $\ell_{12}$ of $\{S_1,S_2\}$ through $u$ and $v$ does not intersect $P_t\cup P_k$ which is a non-contractible cycle.
Therefore, $\ell_{12}$ is contractible.
Hence $\{S_1,S_2\}$ is a contractible revisit, a contradiction.
This completes the proof of Claim 1.
\medskip

If both homotopy classes ${\cal P}_A$ and ${\cal P}_B$ contain at most one path that is not a $W_v$-path, then trivially there are at least
$\kappa_G(x,y)-2$ $W_v$-paths. So in the following we will assume, without loss of generality, that class ${\cal P}_A$ contains exactly two paths that are not $W_v$-paths, $P_1$ and $P_t$, since a face of $G$ can only be revisited by $P_1, P_t, P_{t+1}$ or $P_k$.

\medskip

\noindent
{\bf Claim 2:}  {\sl The paths $P_1$ and $P_t$ cannot revisit the same face.}\par
\medskip

\noindent
{\it Proof of Claim 2:}  Suppose to the contrary that there exists a face $F$ which is revisited by paths
$P_1$ and $P_t$.
By Lemma~\ref{lem:non-contractible}, $F\cap P_1$ has two components $S_1$ and $S_2$.
By Claim 1, one of $S_1$ and $S_2$ is the single vertex $x$ or $y$.
Suppose without loss of generality that $S_1=\{x\}$.
Similarly, $F\cap P_t$ has two components and one of them is the single vertex $x$ or $y$.
Since $x$ and $y$ are not cofacial, the vertex $y$ cannot be a single vertex component of 
$F\cap P_t$.
Therefore, $S_1=\{x\}$ is also a component of $F\cap P_t$.
Let $S_3$ be the other component of $F\cap P_t$.

Let $\ell_{12}$ and $\ell_{13}$ be the dual closed curves of $\{S_1,S_2\}$ and $\{S_1,S_3\}$ respectively.
Note that both $\ell_{12}$ and $\ell_{13}$ are non-contractible.
Therefore, $\ell_{12}$ and $\ell_{13}$ cross transversally at the vertex $x$.
By the definition of dual closed curves, we assume that $\ell_{12}$ and $\ell_{13}$
intersect only at $x$
(otherwise, other intersection components lie either in the face $F$ or $N_{\tp}(P_1)\cap N_{\tp}(P_t)$, and hence can be contracted to $x$).
Let $D''=D\cup N_{\tp}(P_1)\cup N_{\tp}(P_t)$. Then the face $F$ touches the disk $D'$ four times along $\ell_{12}$ and $\ell_{13}$ at $S_1=\{x\}, S_2$ and $S_3$.
So the boundary of $F$ self-intersects at $x$ which contradicts the fact that $G$ is polyhedrally embedded in the projective plane.
This completes the proof of Claim 2.\medskip

By Claim 2, $P_1$ and $P_t$ revisit two distinct faces $F_1$ and $F_2$.
By Lemma~\ref{lem:non-contractible}, $P_1\cap F_1$ has exactly two components $S^1_1$ and $S^1_2$ and
$P_t\cap F_2$ has exactly two components $S^t_1$ and $S^t_2$. 
Next we 
show that both $P_{t+1}$ and $P_k$ are $W_v$-paths.

Assume there is a face $F$ revisited by a path from ${\cal P}_B$, say $P_k$.
Note that the boundary of $F$ is homotopically disjoint  from the boundary of $D\cup F_1\cup F_2$, and therefore,
the boundary of $D'\cup F$ is homotopically disjoint from the boundary of $D\cup F_1\cup F_2$.
Let $\ell_{12}$ be the dual closed curve of the revisits $\{S_1^1, S_2^1\}$ of $F_1$ by $P_1$ and $\ell'$ be the dual curve of the revisits
of $F$ by $P_k$.
Therefore, $\ell_{12}$ and $\ell'$ are homotopically disjoint, a
contradiction to the fact that both $\ell_{12}$ and $\ell'$ are non-contractible. This contradiction implies that
$P_k$ is a $W_v$-path.
Similarly,
so is $P_{t+1}$.
It follows then that $G$ contains at least $\kappa_G(x,y)-2$ internally disjoint $W_v$-paths.
\end{proof}

\begin{figure}[!hbtp] \refstepcounter{figure}\label{fig:3-conn}
\begin{center}
\includegraphics[scale=1]{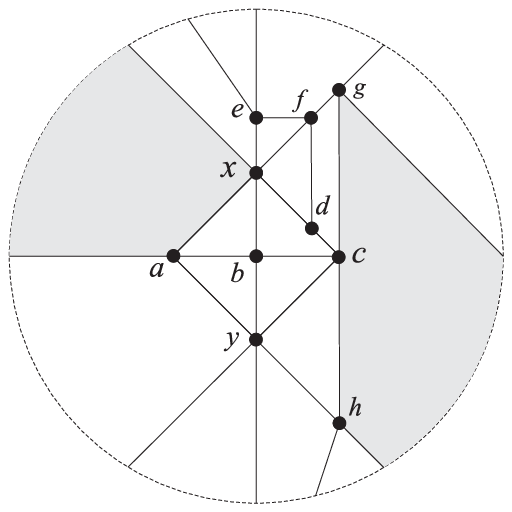}\\
{Figure~\ref{fig:3-conn}: An example.}
\end{center}
\end{figure}

\noindent{\bf Remark:}  The
example
shown in Figure~\ref{fig:3-conn}
shows that the bound of $\kappa_G(x,y)-2$ in Theorem~\ref{thm:proj} for the projective plane is best possible.
In the graph shown
in this figure, there are six internally disjoint $(x,y)$-paths:  $xay$, $xby,$
$xdcy$, $xey$, $xfgy$ and $xhy$.
Hence $\kappa_G(x,y)=6$.
But there are only four internally disjoint non-revisiting $(x,y)$-paths:  $xay,xby,xey$ and $xhy$ as both $xdcy$ and
$xfgy$ revisit the
(shaded)
face bounded by $axhcga$.

\section{Concluding remarks}
Let $\Sigma$ be a closed surface and $G$ be a graph polyhedrally embedded in $\Sigma$. A result of Cook \cite{Co} shows that the connectivity of $G$ is at most $(5+\sqrt{49-24\chi(\Sigma)})/2$ if  $\chi(\Sigma)\le 0$. It then follows that, if $\chi(\Sigma)<-7$, the connectivity of $G$ 
is less than $3-\chi(\Sigma)$. Then $G$ may not have a $W_v$-path for some pair of vertices by Theorem~\ref{thm:main-general}. However, the locally connectivity of a pair of vertices $x$ and $y$ of $G$ could be arbitrarily large. Hence, in the definition of $f(\Sigma)$, the local connectivity cannot be replaced by the connectivity of $G$. 

Theorem~\ref{thm:main-general} shows linear bounds for $f(\Sigma)$ for surfaces $\Sigma$. The previous results of Barnette \cite{B1,B2,B3} and Pulapaka and Vince \cite{PV2} show that $f(\Sigma)=3$ for surfaces with $\chi(\Sigma)\ge 0$. However, the exact values $f(\Sigma)$ for surfaces $\Sigma$ with $\chi(\Sigma)<0$ are unknown. It is interesting to ask the following question.

\begin{prob}
Let $\Sigma$ be a closed surface with $\chi(\Sigma)\le -1$. Determine the exact value of $f(\Sigma)$. 
\end{prob}

A solution to the above question would settle the existence problem of $W_v$-path in graphs polyhedrally embedded in the surface $\Sigma$ with $\chi(\Sigma)=-1$, the only surface for which the existence of a $W_v$-path between a pair of vertices of $G$ remains unknown.

Theorem~\ref{thm:main2} provides a lower bound for the number of internally disjoint $W_v$-paths between a pair of vertices of $G$. The bound is sharp for the sphere, but may not be sharp for other surfaces. Indeed, it is not tight for the projective plane.  We propose the following.

\begin{prob}\label{prob:2}
Let $G$ be a graph polyhedrally embedded in the surface $\Sigma$ and let $x$ and $y$ be two non-cofacial vertices. Find a sharp lower bound for the number of internally disjoint non-revisiting $(x,y)$-paths. 
\end{prob}

Theorem~\ref{thm:proj} evidences that the number of internally disjoint non-revisiting $(x,y)$-paths is related to the Euler characteristic of the surface. But the connection is not clear. A solution to Problem~\ref{prob:2} for the torus or the Klein bottle is interesting, which may lead to a complete solution to the problem.  

\bigskip

\noindent{\bf Acknowledgement.} The authors would like to thank  the anonymous referees for their valuable comments which improved the final version of the paper.

\end{document}